\title{\Large On values of zeta functions of Arakawa-Kaneko type}
\author{Tomoko Hoshi}
\date{}
\def\qed{\hfill $\Box$} 
\newtheorem{defi}{Definition}[section]
\newtheorem{thm}{Theorem}[section]
\newtheorem{lemma}{Lemma}[section]
\newtheorem{ex}{Example}[section]
\newtheorem{rem}{Remark}[section]
\def\underbrace@#1#2{\vtop {\m@th \ialign {##\crcr $\hfil #1{#2}\hfil $\crcr \noalign {\kern 3\p@ \nointerlineskip }\upbracefill \crcr \noalign {\kern 3\p@ }}}}
\def\overbrace@#1#2{\vbox {\m@th \ialign {##\crcr \noalign {\kern 3\p@ }\downbracefill \crcr \noalign {\kern 3\p@ \nointerlineskip }$\hfil #1 {#2}\hfil $\crcr }}}
\def\underbrace#1{%
   \mathop{\mathchoice{\underbrace@{\displaystyle}{#1}}
   {\underbrace@{\textstyle}{#1}}
   {\underbrace@{\scriptstyle}{#1}}
   {\underbrace@{\scriptscriptstyle}{#1}}}\limits
}
\def\overbrace#1{%
   \mathop{\mathchoice{\overbrace@{\displaystyle}{#1}}
   {\overbrace@{\textstyle}{#1}}
   {\overbrace@{\scriptstyle}{#1}}
   {\overbrace@{\scriptscriptstyle}{#1}}}\limits
}
\newcommand{\stiri}[2]{\genfrac{[}{]}{0pt}{0}{#1}{#2}}
\newcommand{\stirii}[2]{\genfrac{\{}{\}}{0pt}{0}{#1}{#2}}
\begin{document}

\maketitle

\begin{abstract}
For these two decades, the Arakawa-Kaneko zeta function has been studied actively. Recently Kaneko and Tsumura constructed its variants from the viewpoint of poly-Bernoulli numbers. In this paper, we generalize their zeta functions of Arakawa-Kaneko type to those with indices in which positive and negative integers are mixed. We show that values of these functions at positive integers can be expressed in terms of the multiple Hurwitz zeta star values.
\end{abstract}


\section{Introduction}

The Arakawa-Kaneko zeta function is defined by
\begin{equation}
\label{xieq}
\xi(k_1, k_2,\ldots,k_r;s) = \frac{1}{\Gamma(s)} \int_{0}^{\infty} t^{s-1} \frac{{\rm Li}_{k_1,k_2, \ldots,k_r}(1-e^{-t})}{e^t-1}dt \ \ \ \ \ \ (\Re(s)>0)
\end{equation}
for $k_1,k_2,\ldots,k_r \in \mathbb{Z}_{\geq 1}$ in \cite{[AK]}.
This zeta function has been studied and generalized by a lot of authors (see, for example, \cite{[BH],[BH2],[CC],[CC2],[H],[H2],[KST],[KO],[KT2],[Y],[Y2]}).

It should be noted that even if we define $\xi(-k_1, -k_2,\ldots ,-k_r;s)$ by replacing $\{ k_{j} \}$ by $\{ -k_j \}$ in (\ref{xieq}), this does not converge for any $s\in \mathbb{C}$.

As variants of \eqref{xieq}, Kaneko and Tsumura \cite{[KT]} defined the following $\eta$ and $\widetilde{\xi}$ functions by
\begin{equation}
\label{eta+eq}
\eta(k_1,k_2, \ldots,k_r;s) = \frac{1}{\Gamma(s)} \int_{0}^{\infty} t^{s-1} \frac{{\rm Li}_{k_1,k_2, \ldots,k_r}(1-e^t)}{1-e^t}dt \ \ \ \ \ \ (\Re(s)>0)
\end{equation}
for $k_1,k_2,\ldots,k_r \in \mathbb{Z}_{\geq 1}$,
\begin{equation}
\label{eta-eq}
\eta(-k_1,-k_2, \ldots,-k_r;s) = \frac{1}{\Gamma(s)} \int_{0}^{\infty} t^{s-1} \frac{{\rm Li}_{-k_1,-k_2, \ldots,-k_r}(1-e^t)}{1-e^t}dt \ \ \ \ \ \ (\Re(s)>0)
\end{equation}
for $k_1,k_2,\ldots,k_r \in \mathbb{Z}_{\geq 0}$, and 
\begin{equation}
\label{tildexieq}
\widetilde{\xi}(-k_1,-k_2,\ldots,-k_r;s) = \frac{1}{\Gamma(s)} \int_{0}^{\infty} t^{s-1} \frac{{\rm Li}_{-k_1,-k_2,\ldots,-k_r}(1-e^{t})}{e^{-t}-1}dt \ \ \ \ \ (\Re(s)>0)
\end{equation}
for $k_1,k_2,\ldots,k_r \in \mathbb{Z}_{\geq0}$ with $(k_1,k_2,\ldots,k_r) \neq (0,\ldots,0)$.

It is also noted that we cannot define $\widetilde{\xi}(k_1,k_2,\ldots,k_r;s)$ by replacing $\{ -k_j \}$ by $\{ k_j \}$ in (\ref{tildexieq}) for $(k_j) \in \mathbb{Z}^{r}_{\geq1}$.

We emphasize that indices of these zeta functions consist of all positive integers or all nonpositive integers.

In this paper, we aim to consider the cases of these zeta functions with indices in which positive and negative integers are mixed. 
To explain our results in some detail, we give two overviews of necessary background.
First, we recall the multiple zeta values (MZVs, for short) given by
\begin{equation}
\label{mzv}
\zeta(p_1, p_2, \ldots, p_n) = \sum_{0<m_1<m_2< \cdots <m_n} \frac{1}{m_1^{p_1}m_2^{p_2}\cdots m_n^{p_n}},
\end{equation}
and the multiple zeta star values (MZSVs, for short) are given by
\begin{equation}
\label{mzsv}
\zeta^{\star}(p_1, p_2, \ldots, p_n) = \sum_{0< m_1\leq m_2\leq \cdots \leq m_n} \frac{1}{m_1^{p_1}m_2^{p_2}\cdots m_n^{p_n}}
\end{equation}
for $p_1,p_2,\ldots,p_n \in \mathbb{Z}_{\geq 1}$ with $p_n \geq 2$.
Further, the multiple Hurwitz zeta star values are given by
\begin{eqnarray}
\label{mhzsv}
&& \zeta^{\star}(p_1,p_2,\ldots,p_n;\alpha_1,\alpha_2,\ldots,\alpha_n) \nonumber \\
&&= \sum_{0 \leq m_1 \leq m_2 \leq \cdots \leq m_n}^{} \frac{1}{(m_1+\alpha_1)^{p_1} (m_2+\alpha_2)^{p_2} \cdots (m_n+\alpha_n)^{p_n}}
\end{eqnarray}
for $p_1,p_2,\ldots,p_n \in \mathbb{Z}_{\geq 1}$ with $p_n \geq 2$ and $\alpha_1,\alpha_2, \ldots, \alpha_n>0$.
Let $\{ \alpha \}^n= (\underbrace{\alpha, \ldots, \alpha}_{n})$.
In particular, we have $\zeta^{\star}(p_1, \ldots ,p_n; \{ 1 \}^n)=\zeta^{\star}(p_1, \ldots ,p_n)$.

Secondly, we recall multi-poly-Bernoulli numbers. Imatomi, Kaneko and Takeda defined 
\begin{align}
\frac{{\rm Li}_{k_1,k_2, \ldots, k_r}(1-e^{-t})}{1-e^{-t}} = \sum_{n=0}^{\infty} B_n^{(k_1,k_2, \ldots,k_r)}\frac{t^n}{n!} \\
\frac{{\rm Li}_{k_1,k_2, \ldots, k_r}(1-e^{-t})}{e^t-1} = \sum_{n=0}^{\infty} C_n^{(k_1,k_2, \ldots,k_r)}\frac{t^n}{n!}
\end{align}
for $k_1, k_2, \ldots, k_r \in \mathbb{Z}$ in \cite{[IKT]}, where
\begin{align}
{\rm Li}_{k_1, k_2, \ldots, k_r}(z) = \sum_{0<m_1<m_2< \cdots < m_r} \frac{z^{m_r}}{m^{k_1}_1 m^{k_2}_2 \cdots m^{k_r}_r} \ \ \ \ \ \ (|z|<1)
\end{align}
is the multiple polylogarithm.
In particular, we have $B_n^{(1)} = (-1)^n C_n^{(1)}$ and these are ``ordinary'' Bernoulli numbers.
Kaneko and Tsumura defined another type of multi-poly-Bernoulli numbers by
\begin{eqnarray}
&& \sum_{a=0}^{r-1} (-1)^a \binom{r-1}{a} \sum_{l_1, \cdots, l_r \geq 1}^{} \frac{\prod_{j=1}^{r}(1-e^{-\sum_{\nu=j}^{r}x_\nu})^{l_j-1}}{(l_1+\cdots+l_r-a)^s} \nonumber \\
&& = \sum_{m_1,\ldots,m_r \geq0}^{}\mathfrak{B}_{m_1,\ldots,m_r}^{(s)} \frac{{x_1}^{m_1} \cdots {x_r}^{m_r}}{m_1! \cdots m_r!}
\end{eqnarray}
for $s \in \mathbb{C}$ in \cite{[KT]}.
The case of $r=1$ gives $\mathfrak{B}^{(k)}_{m}=B^{(k)}_{m}$ for $k \in \mathbb{Z}$.

We see that (\ref{xieq}), (\ref{eta+eq}), (\ref{eta-eq}) and (\ref{tildexieq}) can be analytically continued as entire functions. And the values of these functions at nonpositive integers are given by
\begin{eqnarray}
\xi(k_1, k_2, \ldots ,k_r ;-m) &=& (-1)^mC_m^{(k_1,k_2, \ldots, k_r)}, \\
\eta(k_1, k_2, \ldots ,k_r ;-m) &=&  B_m^{(k_1,k_2, \ldots, k_r)}, \\
\eta(-k_1, -k_2, \ldots, -k_r;-m) &=& B_m^{(-k_1,-k_2, \ldots, -k_r)}, \\
\widetilde{\xi}(-k_1,-k_2,\ldots,-k_r;-m) &=& C_m^{(-k_1,-k_2,\ldots,-k_r)}
\end{eqnarray}
for $m \in \mathbb{Z}_{\geq 0}$.
Arakawa-Kaneko \cite{[AK]} and Kaneko-Tsumura \cite{[KT]} constructed relations for the values of $\xi$ and $\eta$ with MZVs and MZSVs. 
For example, the following relations are important.
For $m \in \mathbb{Z}_{\geq0}$, $r$, $k \in \mathbb{Z}_{\geq 1}$,
\begin{equation}
\label{xim+1eq}
\xi(\underbrace{1,\ldots,1}_{r-1},k;m+1) = \sum_{\substack{a_1+\cdots+a_k=m \\ \forall a_j\geq0}} \binom{a_k+r}{r} \zeta(a_1+1,\ldots,a_{k-1}+1,a_k+r+1),
\end{equation}
\begin{equation}
\label{etam+1eq}
\eta(\underbrace{1,\ldots,1}_{r-1},k;m+1) = (-1)^{r-1} \sum_{\substack{a_1+\cdots+a_k=m \\ \forall a_j\geq0}} \binom{a_k+r}{r} \zeta^{\star}(a_1+1,\ldots,a_{k-1}+1,a_k+r+1).
\end{equation}
Further, Kaneko and Tsumura obtained the following result which is a kind of the duality formula.
For $k_1,\ldots,k_r \in \mathbb{Z}_{\geq 0}$, we have
\begin{equation}
\label{etaseq}
\eta(-k_1, -k_2, \ldots, -k_r;s) = \mathfrak{B}_{k_1, k_2, \ldots, k_r} ^{(s)} \ \ \ \ \ (s\in \mathbb{C}).
\end{equation}
Therefore, for $m \in \mathbb{Z}_{\geq 0}$, we obtain
\begin{equation}
\label{BB}
B_m^{(-k_1,-k_2,\ldots,-k_r)} = \mathfrak{B}^{(-m)}_{k_1,k_2, \ldots, k_r}
\end{equation}
(see \cite[Theorem 4.7]{[KT]}).

Using these results, we define the special types of $\eta$, ${\xi}$ and $\widetilde{\xi}$ functions (see \S \ref{sec-2}-\ref{sec-4}). 
We explicitly give the relations for the values of these functions with MZVs, MZSVs and the multiple Hurwitz zeta star values (see Theorems \ref{eta(k,-n;m+1)}, \ref{eta(-n,k;m+1)}, \ref{eta(1,...,1,-n;m+1)}, \ref{xi(-n,k;m+1)}, \ref{tildexi(k,-n;m+1)}).


\section{$\eta$ function}\label{sec-2}

In this section, we consider $\eta$ function, whose indices consist of positive integers and nonpositive integers.

\subsection{$\eta(k, -n;s)$}

\begin{defi} \label{eta(k,-n;s)}
For $k \in \mathbb{Z}_{\geq 1}$ and $n \in \mathbb{Z}_{\geq 0}$, define
\begin{equation}
\label{eta(k,-n;s)eq}
\eta(k,-n;s) = \frac{1}{\Gamma(s)} \int_{0}^{\infty} t^{s-1} \frac{{\rm Li}_{k,-n}(1-e^t)}{1-e^t}dt
\end{equation}
for $s \in \mathbb{C}$ with $\Re(s)>0$.
\end{defi}

The integral on the right-hand side converges absolutely in the domain $\Re(s)>0$, as is seen from the following lemma.

\begin{lemma} \label{Li_{k,-n}}
For $k \in \mathbb{Z}_{\geq1}$ and $n \in \mathbb{Z}_{\geq0}$ with $k \geq n$, we have
\begin{eqnarray}
\label{Li_{k,-n}eq1}
{\rm Li}_{k,-n}(z)
&=& z \left\{ \frac{P^{(n)}_{n-1}(z)}{(1-z)^{n+1}}{\rm Li}_{k}(z) + \frac{P^{(n)}_{n-2}(z)}{(1-z)^{n}}{\rm Li}_{k-1}(z) + \cdots \right. \nonumber \\
&& \left. + \cdots + \frac{P^{(n)}_{0}(z)}{(1-z)^{2}}{\rm Li}_{k-n+1}(z) + \frac{1}{1-z}{\rm Li}_{k-n}(z) \right\},
\end{eqnarray}
where $P^{(n)}_{i}(z)$ is the polynomial defined by
\begin{equation}
\label{P_i}
P^{(n)}_{i}(z) = \binom{n}{i+1} \sum_{j=0}^{i} \sum_{l=0}^{j+1} (-1)^{l} \binom{i+2}{l} (j-l+1)^{i+1} z^{i-j}
\end{equation}
for $0\leq i \leq n-1$.
\end{lemma}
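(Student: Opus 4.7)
The plan is to reduce ${\rm Li}_{k,-n}(z)$ to standard polylogarithms by applying the Euler operator $\theta := z\,d/dz$. Summing the inner geometric series immediately gives ${\rm Li}_{k,0}(z) = \frac{z}{1-z}{\rm Li}_k(z)$, and since each application of $\theta$ pulls out the outer summation index, termwise differentiation on $|z|<1$ yields $\theta^{n}{\rm Li}_{k,0}(z) = {\rm Li}_{k,-n}(z)$.

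I would then invoke the binomial Leibniz rule for the derivation $\theta$, namely $\theta^{n}(fg) = \sum_{j=0}^{n}\binom{n}{j}\theta^{j}f\cdot\theta^{n-j}g$, applied to $f = z/(1-z)$ and $g = {\rm Li}_{k}(z)$. Using $\theta^{n-j}{\rm Li}_{k}(z) = {\rm Li}_{k-n+j}(z)$ and $\theta^{j}(z/(1-z)) = \sum_{m\geq 1} m^{j}z^{m} = {\rm Li}_{-j}(z)$, the resulting expansion already has the shape of (\ref{Li_{k,-n}eq1}): the $j = 0$ term reproduces the isolated summand $\frac{z}{1-z}{\rm Li}_{k-n}(z)$, and for $j = i+1$ with $0\le i\le n-1$ the factor ${\rm Li}_{k-n+j}(z)$ becomes ${\rm Li}_{k-(n-1-i)}(z)$, matching the polylogarithm in the $(i+1)$-st displayed term of the lemma.

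It then suffices to identify $\binom{n}{i+1}{\rm Li}_{-(i+1)}(z)$ with $z\,P^{(n)}_{i}(z)/(1-z)^{i+2}$. For this I would use the classical closed form ${\rm Li}_{-p}(z) = \frac{z}{(1-z)^{p+1}}\sum_{k=0}^{p-1} A(p,k)\, z^{k}$ together with the Worpitzky expression $A(p,k) = \sum_{l=0}^{k}(-1)^{l}\binom{p+1}{l}(k-l+1)^{p}$ and the symmetry $A(p,k) = A(p,p-1-k)$ of the Eulerian numbers. Setting $p = i+1$ and re-indexing $k \mapsto i-j$, the coefficient of $z^{i-j}$ becomes precisely the inner double sum in (\ref{P_i}); the $l = j+1$ summand written in (\ref{P_i}) vanishes because of the factor $0^{i+1}$ and is included only for symmetry.

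The main obstacle is purely combinatorial bookkeeping: aligning the three indexings (Leibniz index $j$, position $i$ in the lemma, and summation index $l$ in the Eulerian formula) and verifying that the Eulerian reflection correctly converts the standard exponent $(k-l+1)^{p}$ into $(j-l+1)^{i+1}$ as written in (\ref{P_i}). No further analytic input is required, since absolute convergence of ${\rm Li}_{k,-n}(z)$ on $|z|<1$ justifies the termwise differentiations and rearrangements used throughout; the hypothesis $k\ge n$ enters only to ensure that the rightmost factor ${\rm Li}_{k-n}(z)$ is an ordinary (nonnegative-index) polylogarithm.
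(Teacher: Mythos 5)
Your proposal is correct and follows essentially the same route as the paper: the binomial decomposition you obtain from the Leibniz rule for $\theta=z\,d/dz$ applied to ${\rm Li}_{k,0}(z)=\frac{z}{1-z}{\rm Li}_k(z)$ is exactly the identity ${\rm Li}_{k,-n}(z)=\sum_{j=0}^{n}\binom{n}{j}{\rm Li}_{-n+j}(z){\rm Li}_{k-j}(z)$ that the paper asserts by induction, and both arguments then finish by substituting the Eulerian-polynomial closed form for the negative-index polylogarithms. Your observation that the $l=j+1$ summand in \eqref{P_i} vanishes and that the reflection symmetry of the Eulerian numbers reconciles the two indexings is the correct bookkeeping.
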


\noindent
{\it Proof of Lemma \ref{Li_{k,-n}}.}
We can verify
\[
{\rm Li}_{k,-n}(z) = \sum_{j=0}^{n} \binom{n}{j} {\rm Li}_{-n+j}(z) {\rm Li}_{k-j}(z)
\]
by induction.
Further, we know the formula
\[
{\rm Li}_{-n+j}(z) = \frac{\mathcal{E}_{n-j}(z)}{(1-z)^{n-j+1}} \ \ \ \ (j \leq n),
\]
where ${\mathcal{E}}_{i}(z)$ is the Eulerian polynomial given by
\[
\mathcal{E}_{i}(z) = \sum_{j=0}^{i-1} \sum_{l=0}^{j+1} (-1)^{l} \binom{i+1}{l} (j-l+1)^{i} z^{i-j}
\]
(see \cite[(1.2), (1.3)]{[BH]}, \cite{[C]}).
Setting $P_i^{(n)}(z)=\binom{n}{i+1}\mathcal{E}_{i+1}(z)\frac{1}{z}$, we can prove this lemma.
\qed

\begin{ex} \label{exeta(k,-n;s)}
By Definition \ref{eta(k,-n;s)} and Lemma \ref{Li_{k,-n}}, we can derive $\eta(1,0;s)=-s$, and
\begin{align*}
&\eta(2,0;1)=-\zeta(2), \\
&\eta(2,0;2)=-\zeta(2)-2\zeta(3), \\
&\eta(2,-1;1)=-\frac{1}{2}-\frac{1}{2}\zeta(2), \\
&\eta(3,0;1)=-2\zeta(3),  \\
&\eta(3,-1;1) = -\frac{1}{2}\zeta(2) -\zeta(3).
\end{align*}
\end{ex}

\begin{thm} \label{eta(k,-n;-m)}
The function $\eta(k,-n;s)$ can be analytically continued as an entire function, and the values  at nonpositive integers are given by 
\begin{equation}
\eta(k,-n;-m) = B_m^{(k,-n)} \ \ \ \ \ \ (m \in \mathbb{Z}_{\geq 0}).
\end{equation}
\end{thm}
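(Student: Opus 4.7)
The strategy is the standard one pioneered by Arakawa--Kaneko: analyze the integrand near $t=0$ via a Taylor expansion, bound it at $t=\infty$, and read off the values at nonpositive integers from the interplay between the resulting Mellin poles and the poles of $\Gamma(s)$.

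First, I would record the generating series at the origin. Replacing $t$ by $-t$ in the Imatomi--Kaneko--Takeda identity that defines $B_m^{(k,-n)}$ yields
\[
\frac{{\rm Li}_{k,-n}(1-e^{t})}{1-e^{t}} = \sum_{m=0}^{\infty} (-1)^m B_m^{(k,-n)}\frac{t^m}{m!},
\]
valid in a neighborhood of $t=0$. Next I would verify absolute convergence of the defining integral in $\Re(s)>0$: near $t=0$ the integrand behaves like $t^{s-1}$ times an analytic function, while as $t\to\infty$ Lemma \ref{Li_{k,-n}} expresses ${\rm Li}_{k,-n}(1-e^t)/(1-e^t)$ as a linear combination of terms carrying a factor $e^{-(j+1)t}$ (with $j\geq 0$) against a polynomial in $e^t$ and a polylogarithm ${\rm Li}_{k-\ell}(1-e^t)$ of polynomial growth in $t$; hence the tail decays exponentially in $t$ and the same bound is uniform in any vertical strip of $s$.

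Second, I would split $\int_0^\infty = \int_0^1 + \int_1^\infty$. The piece $\int_1^\infty$ extends to an entire function of $s$ by the exponential decay just noted. For $\int_0^1$, I would insert the Taylor expansion above and separate the first $M$ terms to obtain
\[
\int_0^1 t^{s-1}\frac{{\rm Li}_{k,-n}(1-e^t)}{1-e^t}\,dt = \sum_{m=0}^{M-1}\frac{(-1)^m B_m^{(k,-n)}}{m!\,(s+m)} + \int_0^1 t^{s-1}R_M(t)\,dt,
\]
where $R_M(t)=O(t^M)$ near $0$ and the remainder integral is holomorphic in $\Re(s)>-M$. This exhibits $\Gamma(s)\,\eta(k,-n;s)$ as meromorphic in $\Re(s)>-M$ with at most simple poles at $s=0,-1,\ldots,-(M-1)$ of residues $(-1)^m B_m^{(k,-n)}/m!$.

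Finally, since $\Gamma(s)$ itself has a simple pole at $s=-m$ of residue $(-1)^m/m!$, division by $\Gamma(s)$ cancels each of these poles and leaves an entire function of $s$. Passing to the limit and noting that the other terms vanish against the zero of $1/\Gamma(s)$,
\[
\eta(k,-n;-m)=\lim_{s\to -m}\frac{1}{\Gamma(s)}\cdot\frac{(-1)^m B_m^{(k,-n)}}{m!\,(s+m)} = B_m^{(k,-n)},
\]
proving both the entireness (since $M$ is arbitrary) and the stated formula. The only delicate step is controlling the integrand as $t\to\infty$, and this is precisely what Lemma \ref{Li_{k,-n}} is designed to handle; once that decomposition is in hand, the rest is a routine application of the Mellin--$\Gamma$ pairing.
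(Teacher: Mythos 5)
Your proposal is correct, but it takes a different (equally standard) route from the paper. The paper performs the analytic continuation via a Hankel-type contour: it sets $H(k,-n;s)=\int_C t^{s-1}\,{\rm Li}_{k,-n}(1-e^t)/(1-e^t)\,dt$ over the loop around the positive real axis, observes that $H$ is entire because the integrand has no singularity on $C$ and decays exponentially (again by Lemma \ref{Li_{k,-n}}), and then writes $\eta(k,-n;s)=H(k,-n;s)/\bigl((e^{2\pi is}-1)\Gamma(s)\bigr)$; at $s=-m$ the factor $e^{2\pi is}-1$ vanishes, so only the small circle survives and picks out $2\pi i$ times the Taylor coefficient $(-1)^m B_m^{(k,-n)}/m!$, which against the limit $1/\bigl((e^{2\pi is}-1)\Gamma(s)\bigr)\to(-1)^m m!/(2\pi i)$ gives $B_m^{(k,-n)}$. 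You instead split $\int_0^\infty=\int_0^1+\int_1^\infty$, subtract the Taylor polynomial on $[0,1]$ to exhibit $\Gamma(s)\,\eta(k,-n;s)$ with simple poles at $s=0,-1,\dots$ of residues $(-1)^mB_m^{(k,-n)}/m!$, and let these cancel against the poles of $\Gamma$. Both arguments rest on exactly the same two analytic inputs (analyticity of the integrand at $t=0$ with the stated Taylor coefficients, obtained from the Imatomi--Kaneko--Takeda generating function under $t\mapsto -t$, and exponential decay at $t=\infty$ from Lemma \ref{Li_{k,-n}}), and your limit computation $\lim_{s\to-m}\frac{1}{\Gamma(s)(s+m)}=(-1)^m m!$ is right. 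The contour method delivers entireness in one stroke and avoids the strip-by-strip bookkeeping; your method avoids the multivalued $t^{s-1}$ on a loop and is arguably more elementary. One caveat you share with the paper: the decay estimate, hence the whole argument, uses Lemma \ref{Li_{k,-n}}, which is stated only for $k\ge n$; for $k<n$ one needs the analogous decomposition (as in Section \ref{sec-4}), a restriction the theorem statement leaves implicit.
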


\begin{proof}
Let $C$ be the standard contour, namely the path consisting of the positive real axis from $\infty$ to $\varepsilon$, a counterclockwise circle $C_{\varepsilon}$ around the origin of radius $\varepsilon$, and the positive real axis from $\varepsilon$ to $\infty$.
Let
\begin{eqnarray*}
H(k,-n;s)
&=& \int_{C} t^{s-1} \frac{{\rm Li}_{k,-n}(1-e^t)}{1-e^t}dt \nonumber \\
&=& (e^{2\pi is}-1) \int_\varepsilon^\infty t^{s-1} \frac{{\rm Li}_{k,-n}(1-e^t)}{1-e^t}dt + \int_{C_\varepsilon}  t^{s-1} \frac{{\rm Li}_{k,-n}(1-e^t)}{1-e^t}dt.
\end{eqnarray*}
It follows from Lemma \ref{Li_{k,-n}} that $H(k,-n;s)$ is entire, because the integrand has no singularity on $C$ and the contour integral is absolutely convergent for all $s \in \mathbb{C}$. When we suppose $\Re(s)>0$, we can see
\begin{eqnarray*}
\int_{C_\varepsilon} t^{s-1} \frac{{\rm Li}_{k,-n}(1-e^t)}{1-e^t} dt \xrightarrow[\varepsilon \to0]{} 0.
\end{eqnarray*}
Hence
\begin{eqnarray*}
\eta(k,-n;s)
&=& \frac{1}{(e^{2 \pi is}-1) \Gamma(s)} H(k,-n;s),
\end{eqnarray*}
which can be analytically continued to $\mathbb{C}$, and is entire. In fact $\eta(k,-n;s)$ is holomorphic for $\Re(s)>0$, hence has no singularity at any positive integer.
Note that
\begin{eqnarray*}
\frac{1}{(e^{2 \pi is}-1)\Gamma(s)} \xrightarrow[s\to-m]{} \frac{(-1)^mm!}{2 \pi i} \ \ \ \ (m \in \mathbb{Z}_{\geq 0}).
\end{eqnarray*}
Setting $s=-m \in \mathbb{Z}_{\leq 0}$, we have $\eta(k,-n;-m) = B^{(k,-n)}_m$.
This completes the proof.
\end{proof}

Concerning the values at positive integer arguments, we prove the following theorem.

\begin{thm} \label{eta(k,-n;m+1)}
For $m \in \mathbb{Z}_{\geq 0}, k \in \mathbb{Z}_{\geq 1}$ and $n \in \mathbb{Z}_{\geq 0}$ with $k>n$, we have
\begin{eqnarray}
\label{eta(k,-n;m+1)eq}
&& \eta(k,-n;m+1) \nonumber \\ 
&=& -\sum_{l=0}^{n-1} \sum_{j=0}^{n-l-1} \sum_{\substack{a_{l+1}+\cdots+a_{k}=m \\ \forall a_i\geq0}} A_{l,j}^{(n)}(a_k+1) \frac{1}{(n-l-j+1)^{a_{l+1}+1}} \nonumber \\
&& \times \zeta^{\star} (a_{l+2}+1, \ldots , a_{k-1}+1, a_{k}+2;\{ n-l-j+1 \}^{k-l-1}) \nonumber \\
&& -\sum_{\substack{a_{n+1}+\cdots+a_{k}=m \\ \forall a_i\geq0}} (a_k+1) \zeta^{\star} (a_{n+2}+1, \ldots ,a_{k-1}+1, a_{k}+2;\{ 1 \}^{k-n-1}),
\end{eqnarray}
where $A_{l,j}^{(n)}$ is the rational number defined by
\begin{equation}
A_{l,j}^{(n)} = \binom{n}{l} \sum_{b=0}^{n-l-j-1} \sum_{d=0}^{b+1} (-1)^{d+j} \binom{n-l+1}{d} \binom{n-l-b-1}{j} (b-d+1)^{n-l}.
\end{equation}
\end{thm}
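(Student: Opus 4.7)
Since $\eta(k,-n;m+1) = \frac{1}{m!}\int_0^\infty t^m \frac{{\rm Li}_{k,-n}(1-e^t)}{1-e^t}\,dt$, my strategy is to decompose the integrand via Lemma \ref{Li_{k,-n}} and evaluate each resulting piece. With $z = 1-e^t$, so that $1-z = e^t$, the lemma gives
\[
\frac{{\rm Li}_{k,-n}(1-e^t)}{1-e^t} = \sum_{i=0}^{n-1} \frac{P_i^{(n)}(1-e^t)}{e^{(i+2)t}}\,{\rm Li}_{k-n+i+1}(1-e^t) + e^{-t}\,{\rm Li}_{k-n}(1-e^t).
\]
Expanding $(1-e^t)^a = \sum_{c=0}^a \binom{a}{c}(-1)^c e^{ct}$, each $P_i^{(n)}(1-e^t)/e^{(i+2)t}$ becomes a linear combination of exponentials $e^{-qt}$ with $q \in \{2,\ldots,i+2\}$. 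After reparametrizing $i = n-l-1$ and $q = n-l-j+1$ and comparing with the explicit formula for $P_i^{(n)}$, a direct verification shows that the coefficient of $e^{-qt}\,{\rm Li}_{k-l}(1-e^t)$ is precisely $A_{l,j}^{(n)}$.

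\textbf{Key integral identity.} The heart of the proof is to establish, for integers $p \geq 2$, $q \geq 1$, and $m \geq 0$, the formula
\[
\frac{1}{m!}\int_0^\infty t^m e^{-qt}\,{\rm Li}_p(1-e^t)\,dt = -\sum_{\substack{a_1+\cdots+a_p=m \\ a_i \geq 0}} (a_p+1)\,\frac{1}{q^{a_1+1}}\,\zeta^{\star}\bigl(a_2+1,\ldots,a_{p-1}+1,a_p+2;\{q\}^{p-1}\bigr),
\]
while the only $p=1$ case occurring in the decomposition (namely $q=1$, when $k=n+1$) reduces both sides to $-(m+1)$ trivially. To prove the identity, I iterate $\frac{d}{dt}{\rm Li}_p(1-e^t) = \frac{e^t}{e^t-1}{\rm Li}_{p-1}(1-e^t)$ with base case ${\rm Li}_1(1-e^t) = -t$ to obtain
\[
{\rm Li}_p(1-e^t) = -\int_{0 \leq u_1 \leq \cdots \leq u_{p-1} \leq t} u_1 \prod_{i=1}^{p-1}\frac{e^{u_i}}{e^{u_i}-1}\,du_i.
\]
Interchanging the orders of integration by Fubini, applying $\int_{u_{p-1}}^\infty t^m e^{-qt}\,dt = m!\,e^{-qu_{p-1}}\sum_{j=0}^{m} u_{p-1}^j/(j!\,q^{m+1-j})$, substituting $u_i = v_1+\cdots+v_i$ with $v_j \geq 0$, and expanding both $\frac{e^{u_i}}{e^{u_i}-1} = \sum_{n_i \geq 0}e^{-n_i u_i}$ and $u_{p-1}^{a_p} = (v_1+\cdots+v_{p-1})^{a_p}$ by the multinomial theorem, each $v_j$-integral becomes a Gamma-integral producing a factor $1/(N_j+q)^{\text{exponent}}$ with $N_j := n_j+\cdots+n_{p-1}$.

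\textbf{Conclusion and obstacle.} The reindexing $M_i := N_{p-i}$ converts $\sum_{\vec{n}\geq 0}$ into $\sum_{0 \leq M_1 \leq \cdots \leq M_{p-1}}$, recognisably producing the multiple Hurwitz zeta star value $\zeta^{\star}(\cdots;\{q\}^{p-1})$. Setting $a_1 := m-a_p$ and relabeling the multinomial indices gives the constraint $a_1+\cdots+a_p=m$. Substituting the key identity back into the decomposition from the first step, with $p=k-l$, $q=n-l-j+1$ for the first family and $p=k-n$, $q=1$ for the last piece, and renaming $a_i \leftrightarrow a_{l+i}$, yields the formula of the theorem. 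The main obstacle is the combinatorial bookkeeping in the second step: correctly matching the multinomial expansion of $(v_1+\cdots+v_{p-1})^{a_p}$, the reversed reindexing $\vec{n} \leftrightarrow \vec{M}$, and all the Gamma-integral exponents so that everything collapses into a single clean $\zeta^{\star}$ with exactly the arguments and Hurwitz parameters appearing on the right-hand side of the theorem.
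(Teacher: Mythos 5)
Your proposal is correct and follows essentially the same route as the paper: you decompose $\mathrm{Li}_{k,-n}(1-e^t)/(1-e^t)$ via Lemma \ref{Li_{k,-n}} with $P_i^{(n)}(1-e^t)$ expanded into exponentials $e^{-qt}$ (which is exactly the paper's substitution $P_i^{(n)}(z)=\sum_j A^{(n)}_{n-i-1,j}(1-z)^j$), and your ``key integral identity'' for $\int_0^\infty t^m e^{-qt}\mathrm{Li}_p(1-e^t)\,dt$ is precisely the content of the paper's iterated integral $I(m+1;k,-n)$ combined with Lemma \ref{calculation1}, proved by the same iterated-integral representation, change of variables, multinomial expansion, and geometric-series summation. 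The remaining bookkeeping you flag is routine and is likewise left implicit in the paper (Lemma \ref{calculation1} is only cited as analogous to \cite[Lemma 2.7]{[KT]}).
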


In order to prove the theorem, we give the following integral expression, which can be similarly proved as \cite[Lemma 2.7]{[KT]}.

\begin{lemma} \label{calculation1}
\label{lemma}
For $a_{l+1},a_{l+2}, \ldots ,a_{k-1} ,a_{k} \in \mathbb{Z}_{\geq 1}$ and $n \in \mathbb{N}$, we have
\begin{eqnarray*}
&& \frac{1}{\prod_{m=l+1}^{k}\Gamma(a_m+1)} \int_{0}^{\infty} \cdots \int_{0}^{\infty} x_{l+1}^{a_{l+1}} \cdots x_{k-1}^{a_{k-1}}x_k^{a_k+1} \\
&& \times \frac{e^{x_k}}{e^{x_k}-1} \frac{e^{x_{k-1}+x_k}}{e^{x_{k-1}+x_k}-1} \cdots \frac{e^{x_{l+2}+\cdots+{x_k}}}{e^{x_{l+2}+\cdots+{x_k}}-1} \frac{1}{e^{n(x_{l+1}+ \cdots +x_{k})}} dx_{l+1} \cdots dx_k \\
&=& \frac{a_k+1}{n^{a_{l+1}+1}} \zeta^{\star}(a_{l+2}+1, a_{l+3}+1, \ldots ,a_{k-1}+1, a_k+2 ; \{n\}^{k-l-1}).
\end{eqnarray*}
\end{lemma}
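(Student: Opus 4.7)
The plan is to expand each factor $e^{x_j+\cdots+x_k}/(e^{x_j+\cdots+x_k}-1)$ as a geometric series, carry out the resulting elementary $x_i$-integrals, and then reindex the sum so it matches the definition (\ref{mhzsv}) of the multiple Hurwitz zeta star value.

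First, on the domain of integration one has $x_j+\cdots+x_k>0$, hence
$$\frac{e^{x_j+\cdots+x_k}}{e^{x_j+\cdots+x_k}-1} = \frac{1}{1-e^{-(x_j+\cdots+x_k)}} = \sum_{m_j=0}^{\infty} e^{-m_j(x_j+\cdots+x_k)}$$
for each $j=l+2,\ldots,k$. Multiplying these $k-l-1$ series together with the extra factor $e^{-n(x_{l+1}+\cdots+x_k)}$ and collecting the coefficient of each $x_i$, I obtain
$$\sum_{m_{l+2},\ldots,m_k\geq 0}\exp\Bigl(-\sum_{i=l+1}^{k}M_i\,x_i\Bigr),$$
where $M_{l+1}=n$ and $M_i=n+m_{l+2}+\cdots+m_i$ for $l+2\leq i\leq k$. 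Since every summand is nonnegative, Tonelli's theorem justifies interchanging this multiple sum with the multiple integral.

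Next, I would apply the elementary identity $\int_0^{\infty} x^{a}e^{-Mx}\,dx = a!/M^{a+1}$ in each variable. After dividing by $\prod_{m=l+1}^{k}\Gamma(a_m+1)$, the factorials $a_{l+1}!,\ldots,a_{k-1}!$ cancel cleanly, and the extra power of $x_k$ leaves a residual factor $a_k+1$. Since $M_{l+1}=n$ is independent of the summation indices, it can be pulled out, and the left-hand side reduces to
$$\frac{a_k+1}{n^{a_{l+1}+1}} \sum_{m_{l+2},\ldots,m_k \geq 0} \frac{1}{M_{l+2}^{a_{l+2}+1}\cdots M_{k-1}^{a_{k-1}+1}\,M_k^{a_k+2}}.$$

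Finally, the change of summation variables $N_i=m_{l+2}+\cdots+m_i$ for $i=l+2,\ldots,k$ converts $m_{l+2},\ldots,m_k\geq 0$ into $0\leq N_{l+2}\leq N_{l+3}\leq\cdots\leq N_k$, and each $M_i$ becomes $n+N_i$. Comparing with (\ref{mhzsv}), the remaining sum is precisely $\zeta^{\star}(a_{l+2}+1,\ldots,a_{k-1}+1,a_k+2;\{n\}^{k-l-1})$, yielding the claimed identity. The main obstacle is not analytic but purely combinatorial bookkeeping: identifying the correct exponent $M_i$ of each $x_i$ and verifying that the reindexing produces exactly the nested inequalities $0\leq N_{l+2}\leq\cdots\leq N_k$; convergence is automatic because every quantity involved is nonnegative.
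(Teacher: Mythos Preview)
Your argument is correct: expanding each factor as a geometric series, integrating term by term, and reindexing via the partial sums $N_i$ produces exactly the multiple Hurwitz zeta star value on the right. The paper itself does not give a proof of this lemma; it only remarks that it ``can be similarly proved as \cite[Lemma~2.7]{[KT]},'' and your series-expansion approach is precisely the standard method used there.
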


\noindent
{\it Proof of Theorem \ref{eta(k,-n;m+1)}.}
By setting $P^{(n)}_{i}(z)=\sum_{j=0}^{i}A^{(n)}_{n-i-1,j}(1-z)^j$ in Lemma $\ref{Li_{k,-n}}$, we obtain
\begin{eqnarray*}
A^{(n)}_{i,j} =  \binom{n}{i} \sum_{b=0}^{n-i-j-1} \sum_{d=0}^{b+1}  (-1)^{d+j} \binom{n-i+1}{d} \binom{n-i-b-1}{j} (b-d+1)^{n-i}.
\end{eqnarray*}
Consider the integral
\begin{eqnarray*}
I(s;k,-n)
&=& \frac{1}{\Gamma(s)} \sum_{l=0}^{n-1} \sum_{j=0}^{n-l-1} A^{(n)}_{l,j} \int_{0}^{\infty} \int_{0}^{u_k} \cdots \int_{0}^{u_{l+2}} u_k^{s-1} u_{l+1} \frac{e^{u_{l+1}}}{e^{u_{l+1}}-1} \cdots \frac{e^{u_{k-1}}}{e^{u_{k-1}}-1} \\
&& \times \frac{1}{e^{(n-l-j+1)u_k}} du_{l+1} \cdots du_{k} \\
&& + \frac{1}{\Gamma(s)} \int_{0}^{\infty} \int_{0}^{v_k} \cdots \int_{0}^{v_{n+2}} v_k^{s-1} v_{n+1} \frac{e^{v_{n+1}}}{e^{v_{n+1}}-1} \cdots \frac{e^{v_{k-1}}}{e^{v_{k-1}}-1} \\
&& \times \frac{1}{e^{v_k}} dv_{n+1}  \cdots dv_k.
\end{eqnarray*}
We can transform this formula as follows.
\begin{eqnarray*}
I(s;k,-n)
&=& \frac{-1}{\Gamma(s)} \sum_{l=0}^{n-1} \sum_{j=0}^{n-l-1} A^{(n)}_{l,j} \int_{0}^{\infty} u_k^{s-1} \int_{0}^{u_k} \cdots \left\{ \int_{0}^{u_{l+2}} \frac{{\rm Li}_{1}(1-e^{u_{l+1}})e^{u_{l+1}}}{e^{u_{l+1}}-1} du_{l+1} \right\} \\
&& \times \frac{e^{u_{l+2}}}{e^{u_{l+2}}-1} \cdots \frac{e^{u_{k-1}}}{e^{u_{k-1}}-1} \frac{1}{e^{(n-l-j+1)u_k}} du_{l+2} \cdots du_{k} \\
&& + \frac{-1}{\Gamma(s)} \int_{0}^{\infty} v_k^{s-1} \int_{0}^{v_k} \cdots \left\{ \int_{0}^{v_{n+2}} \frac{{\rm Li}_{1}(1-e^{v_{n+1}})e^{v_{n+1}}}{e^{v_{n+1}}-1} dv_{n+1} \right\} \\
&& \times \frac{e^{v_{n+2}}}{e^{v_{n+2}}-1} \cdots \frac{e^{v_{k-1}}}{e^{v_{k-1}}-1} \frac{1}{e^{v_k}} dv_{n+2} \cdots dv_k \\
&=& \frac{-1}{\Gamma(s)} \sum_{l=0}^{n-1} \sum_{j=0}^{n-l-1} A^{(n)}_{l,j} \int_{0}^{\infty} u_k^{s-1} \frac{{\rm Li}_{k-l}(1-e^{u_k})}{e^{(n-l-j+1)u_k}} du_k \\
&& + \frac{-1}{\Gamma(s)} \int_{0}^{\infty} v_k^{s-1} \frac{{\rm Li}_{k-n}(1-e^{v_k})}{e^{v_k}} dv_k \\
&=& \frac{-1}{\Gamma(s)} \sum_{l=0}^{n-1} \int_{0}^{\infty} u_k^{s-1} \frac{P^{(n)}_{n-l-1}(1-e^{u_k})}{e^{(n-l+1)u_k}} {\rm Li}_{k-l}(1-e^{u_k}) du_k \\
&& + \frac{-1}{\Gamma(s)} \int_{0}^{\infty} v_k^{s-1} \frac{1}{e^{v_k}} {\rm Li}_{k-n}(1-e^{v_k}) dv_k \\
&=& -\eta(k,-n;s).
\end{eqnarray*}
We make the change of variables $u_{l+1}=x_k, u_{l+2}=x_{k-1}+x_k ,\ldots ,u_k=x_{l+1} +\cdots+x_k$ and $v_{n+1}=y_k, v_{n+2}=y_{k-1}+y_k ,\ldots ,v_k=y_{n+1} +\cdots+y_k$.
Then, it follows from Lemma \ref{calculation1} that for $m \in \mathbb{Z}_{\geq 0}$,
\begin{eqnarray*}
I(m+1;k,-n)
&=&  \frac{1}{\Gamma(m+1)} \sum_{l=0}^{n-1} \sum_{j=0}^{n-l-1} A^{(n)}_{l,j} \int_{0}^{\infty} \cdots \int_{0}^{\infty} \sum_{\substack{a_{l+1}+\cdots+a_{k}=m \\ \forall a_i\geq0}} \frac{m!}{a_{l+1}! \cdots a_{k}!} \\
&& \times  x_{l+1}^{a_{l+1}} \cdots x_{k}^{a_k} x_k  \\
&& \times \frac{e^{x_k}}{e^{x_k}-1} \cdots \frac{e^{x_{l+2}\cdots+x_k}}{e^{x_{l+2}+\cdots+x_k}-1} \frac{1}{e^{(n-l-j+1)(x_{l+1} +\cdots+x_k)}} dx_{l+1} \cdots dx_{k} \\
&&  + \frac{1}{\Gamma(m+1)} \int_{0}^{\infty} \cdots \int_{0}^{\infty}  \sum_{\substack{a_{n+1}+\cdots+a_{k}=m \\ \forall a_i\geq0}} \frac{m!}{a_{n+1}! \cdots a_{k}!} y_{n+1}^{a_{n+1}} \cdots y_{k}^{a_k} y_k  \\
&& \times \frac{e^{y_k}}{e^{y_k}-1} \cdots \frac{e^{y_{n+2}\cdots+y_k}}{e^{y_{n+2}+\cdots+y_k}-1} \frac{1}{e^{y_{n+1} +\cdots+y_k}} dy_{n+1} \cdots dy_{k} \\
&=&  \sum_{l=0}^{n-1} \sum_{j=0}^{n-l-1} \sum_{\substack{a_{l+1}+\cdots+a_{k}=m \\ \forall a_i\geq0}} A^{(n)}_{l,j} \frac{1}{\Gamma(a_{l+1}+1) \cdots \Gamma(a_{k}+1)} \\
&& \times \int_{0}^{\infty} \cdots \int_{0}^{\infty} x_{l+1}^{a_{l+1}} \cdots x_{k-1}^{a_{k-1}} x_k^{a_k+1} \\
&& \times \frac{e^{x_k}}{e^{x_k}-1} \cdots \frac{e^{x_{l+2}\cdots+x_k}}{e^{x_{l+2}+\cdots+x_k}-1} \frac{1}{e^{(n-l-j+1)(x_{l+1} +\cdots+x_k)}} dx_{l+1} \cdots dx_{k} \\
&& + \sum_{\substack{a_{n+1}+\cdots+a_{k}=m \\ \forall a_i\geq0}} \frac{1}{\Gamma(a_{n+1}+1) \cdots \Gamma(a_{k}+1)} \\
&& \times \int_{0}^{\infty} \cdots \int_{0}^{\infty}  y_{n+1}^{a_{n+1}} \cdots y_{k-1}^{a_{k-1}} y_k^{a_k+1} \\
&& \times \frac{e^{y_k}}{e^{y_k}-1} \cdots \frac{e^{y_{n+2}\cdots+y_k}}{e^{y_{n+2}+\cdots+y_k}-1} \frac{1}{e^{y_{n+1} +\cdots+y_k}} dy_{n+1} \cdots dy_{k} \\
&=& \sum_{l=0}^{n-1} \sum_{j=0}^{n-l-1} \sum_{\substack{a_{l+1}+\cdots+a_{k}=m \\ \forall a_i\geq0}} A^{(n)}_{l,j}(a_k+1) \frac{1}{(n-l-j+1)^{a_{l+1}+1}} \\
&& \times \zeta^{\star} (a_{l+2}+1, \ldots , a_{k-1}+1, a_{k}+2;\{ n-l-j+1 \}^{k-l-1}) \\
&& +\sum_{\substack{a_{n+1}+\cdots+a_{k}=m \\ \forall a_i\geq0}} (a_k+1) \zeta^{\star} (a_{n+2}+1, \ldots ,a_{k-1}+1, a_{k}+2;\{ 1 \}^{k-n-1}).
\end{eqnarray*}
\qed

\begin{rem}
We denote the right-hand side of $\eta(k,-n;m+1)$ of (\ref{eta(k,-n;m+1)eq}) by $-S_1-S_2$. If $n=0$, we define $S_1=0$. Further if $n=k-1$, we define $S_2=\sum_{a_k=0}^{m} (a_k+1)$.
\end{rem}

\begin{rem}
If $k \leq n$, we can show a certain formula by using same method.
\end{rem}

\begin{ex}
If $k=3$ and $n=0$ in Theorem \ref{eta(k,-n;m+1)},, we have
\begin{align*}
\eta(3,0;m+1) = -\sum_{\substack{a_{1}+a_{2}+a_{3}=m \\ \forall a_i\geq0}} (a_3+1) \zeta^{\star}(a_2+1, a_3+1). 
\end{align*}
Setting $m=0$, we obtain $\eta(3,0;1) = -\zeta^{\star}(1,2)$, which implies $\zeta^{\star}(1,2) = 2\zeta(3)$ by Example \ref{exeta(k,-n;s)}.
\end{ex}

\begin{ex}
If $k=2$ and $n=1$ in Theorem \ref{eta(k,-n;m+1)}, we have
\begin{eqnarray*}
\eta(2,-1;m+1)
&=& -\sum_{a_2=0}^{m} (a_2+1) \frac{1}{2^{m-a_2+1}} \zeta^{\star}(a_2+2;2)-(m+1).
\end{eqnarray*}
Setting $m=0$, we obtain $\eta(2,-1;1) = -\frac{1}{2}\zeta(2)-\frac{1}{2}$.
This result corresponds to Example \ref{exeta(k,-n;s)}.
\end{ex}

\begin{ex}
We have $A_{0,0}^{(1)}=1$. Hence, if $k=3$ and $n=1$ in Theorem \ref{eta(k,-n;m+1)}, we have
\begin{eqnarray*}
\eta(3,-1;m+1)
&=& -\sum_{a_1+a_2+a_3=m} (a_3+1) \frac{1}{2^{a_1+1}} \zeta^{\star}(a_2+1,a_3+2; \{2\}^2) \\
&& -\sum_{a_2+a_3=m} (a_3+1) \zeta^{\star} (a_3+2).
\end{eqnarray*}
Setting $m=0$, we obtain $\zeta(3,-1;1)=-\frac{1}{2} \zeta^{\star}(1,2;\{2\}^2)-\zeta(2)$, which implies $\zeta^{\star}(1,2;\{2\}^2)=-\zeta(2)+2\zeta(3)$ by Example \ref{exeta(k,-n;s)}.

\end{ex}


\subsection{$\eta(-n,k;s)$}

\begin{defi} \label{eta(-n,k;s)}
For $k \in \mathbb{Z}_{\geq1}$ and $n \in \mathbb{Z}_{\geq0}$, define
\begin{equation}
\label{eta(-n,k;s)eq}
\eta(-n,k;s) = \frac{1}{\Gamma(s)} \int_{0}^{\infty} t^{s-1} \frac{{\rm Li}_{-n,k}(1-e^t)}{1-e^t}dt
\end{equation}
for $s \in \mathbb{C}$ with $\Re(s)>0$.
\end{defi}

The integral on the right-hand side converges absolutely in the domain $\Re(s)>0$, as is seen from the following lemma.

\begin{lemma} \label{Li_{-n,k}}
For $k \in \mathbb{Z}_{\geq1}$ and $n \in \mathbb{Z}_{\geq0}$, we have
\begin{equation}
\label{Li_{-n,k}eq}
{\rm Li}_{-n,k}(z) = \sum_{l=0}^{n+1} D_{l}^{(n)} {\rm Li}_{k-l}(z),
\end{equation}
where $D_{l}^{(n)}$ is the real number defined by
\begin{eqnarray}
D_{l}^{(n)} = \left\{ 
\begin{array}{ll}
-1 & (l=n=0) \\
0 & (l=0,n>0) \\
\dbinom{n}{l} (-1)^{n-l} \zeta(-n+l) & (l \neq 0,n+1)\\
\dfrac{1}{n+1} & (l=n+1).
\end{array}
\right.
\end{eqnarray}
\end{lemma}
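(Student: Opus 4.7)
The plan is to unfold the double series defining ${\rm Li}_{-n,k}$ and evaluate the inner power sum in closed form via Faulhaber's formula. Writing
\[
{\rm Li}_{-n,k}(z) = \sum_{m \geq 1} \frac{z^{m}}{m^{k}} \sum_{j=1}^{m-1} j^{n},
\]
the inner sum is the classical Faulhaber sum $S_n(m) = \frac{1}{n+1}[B_{n+1}(m) - B_{n+1}(1)]$ for $n \geq 1$. Expanding $B_{n+1}(m) = \sum_{i=0}^{n+1}\binom{n+1}{i} B_i\, m^{n+1-i}$ and using $B_{n+1}(1) = B_{n+1}$ for $n \geq 1$ cancels the constant-in-$m$ term, yielding a polynomial of degree $n+1$ in $m$ with no constant coefficient. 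Swapping the two sums, recognizing each $\sum_{m\geq 1} z^{m}/m^{k-n-1+i}$ as ${\rm Li}_{k-n-1+i}(z)$, and re-indexing by $l = n+1-i$ gives
\[
{\rm Li}_{-n,k}(z) = \frac{1}{n+1}\sum_{l=1}^{n+1} \binom{n+1}{l} B_{n+1-l}\, {\rm Li}_{k-l}(z),
\]
so $D_l^{(n)} = \frac{1}{n+1}\binom{n+1}{l} B_{n+1-l}$ for $1 \leq l \leq n+1$ and automatically $D_0^{(n)} = 0$ for $n \geq 1$. The leading case $l = n+1$ produces $D_{n+1}^{(n)} = 1/(n+1)$ immediately.

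Second, I would convert the Bernoulli coefficients to zeta values via $\zeta(1-m) = -B_m/m$. Using the identity $\frac{1}{n+1}\binom{n+1}{l} = \frac{1}{n+1-l}\binom{n}{l}$, valid for $l \leq n$, the formula above turns $D_l^{(n)}$ into $-\binom{n}{l}\zeta(l-n)$ whenever $n+1-l \geq 2$. To match the advertised sign factor $(-1)^{n-l}$, I would invoke the vanishing $B_m = 0$ for $m \geq 3$ odd together with $\zeta(l-n) = 0$ whenever $l-n$ is a negative even integer: both sides of $D_l^{(n)} = \binom{n}{l}(-1)^{n-l}\zeta(l-n)$ are zero unless $n-l$ is odd, in which case $(-1)^{n-l} = -1$ and the expressions coincide.

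The boundary cases need separate verification. For $l = n$, the relevant Bernoulli number is $B_1$; adopting the convention $B_1 = -1/2$ (consistent with the Faulhaber sum from $j = 1$ to $m-1$) gives $D_n^{(n)} = -1/2 = \zeta(0)$, as required. For $n = 0$, Faulhaber is replaced by the trivial identity $\sum_{j=1}^{m-1} 1 = m - 1$, which yields ${\rm Li}_{0,k}(z) = {\rm Li}_{k-1}(z) - {\rm Li}_k(z)$ and hence $D_0^{(0)} = -1$, $D_1^{(0)} = 1$. The main obstacle is purely bookkeeping: aligning the $\pm 1/2$ convention for $B_1$, isolating the case $n = 0$, and using the vanishing of odd Bernoulli numbers and of negative-even zeta values to absorb the sign $(-1)^{n-l}$ into the unified statement.
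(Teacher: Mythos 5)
Your argument is correct and follows essentially the same route as the paper: unfold the double series, evaluate the inner power sum $\sum_{j=1}^{m-1}j^{n}$ by Faulhaber's formula, swap summations to identify the coefficients $D_l^{(n)}$ as scaled Bernoulli numbers, and convert these to zeta values via $\zeta(1-m)=-B_m/m$, with the boundary cases $l=n$, $l=n+1$ and $n=0$ checked separately. The only cosmetic difference is that you write the power sum as $\frac{1}{n+1}\bigl(B_{n+1}(m)-B_{n+1}(1)\bigr)$, which has no constant term in $m$, so you avoid the paper's extra verification that $\sum_{l}D_l^{(n)}=0$, needed there to dispose of the correction term coming from starting the outer sum at $m_2=2$.
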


\begin{proof}
For $n \in \mathbb{Z}_{\geq 0}, l \in \mathbb{Z}$ with $0 \leq l \leq n+1$, we can choose  rational numbers $\{D_{l}^{(n)}\}$ such that 
\begin{equation*}
\sum_{m_1=1}^{m_2-1} m_1^{n}= D_0^{(n)} + D_1^{(n)}m_2 + D_2^{(n)}m_2^{2} + \cdots + D_{n+1}^{(n)}m_2^{n+1}
\end{equation*}
for $m_2 \geq 2$ as follows.
By Faulhaber's formula (see \cite{[CG]}), we have
\begin{eqnarray*}
\sum_{m_1=1}^{m_2-1} m_1^{n}
&=& \frac{1}{n+1} \sum_{j=0}^{n} \binom{n+1}{j} B_j (m_2-1)^{n-j+1} \\
&=& n! \left\{ \sum_{j=0}^{n} \frac{1}{j!(n-j+1)!}(-1)^{n-j+1} B_j \right. \\
&& \left. + \sum_{l=1}^{n+1} \sum_{j=0}^{n-l+1} \frac{1}{j!l!(n-j-l+1)!}(-1)^{n-j-l+1} B_j m_2^l \right\}.
\end{eqnarray*}
When $l=0$, we can see
\begin{eqnarray*}
D_0^{(n)}
&=& n! \sum_{j=0}^{n} \frac{1}{j!(n-j+1)!} (-1)^{n-j+1} B_j \\
&=& n! \sum_{j=0}^{n+1} \frac{1}{j!(n-j+1)!} (-1)^{n-j+1} B_j -n!\frac{B_{n+1}}{(n+1)!} \\
&=& \frac{(-1)^{n+1}B_{n+1}-B_{n+1}}{n+1}.
\end{eqnarray*}
Using $B_{2k+1}=0$, we obtain $D_0^{(n)}=0$ for $n>0$ and $D_0^{(0)}=-1$ for $n=0$.
Further when $l=n+1$, we can derive
\begin{equation*}
D_{n+1}^{(n)}
= n!\sum_{j=0}^{0} \frac{1}{j!(n+1)!(-j)!} (-1)^{-j} B_j
= \frac{1}{n+1}.
\end{equation*}
Further when $l \neq 0, n+1$, we can see that
\begin{eqnarray*}
D_l^{(n)}
&=& n! \sum_{j=0}^{n-l+1} \frac{1}{j!l!(n-j-l+1)!} (-1)^{n-j-l+1} B_j \\
&=& \frac{n!(-1)^{n-l+1}}{l!(n-l+1)!} \sum_{j=0}^{n-l+1} (-1)^j \binom{n-l+1}{j} B_j \\
&=& \frac{n!(-1)^{n-l+1}}{l!(n-l+1)!} B_{n-l+1} \\
&=& \binom{n}{l} (-1)^{n-l} \zeta(-n+l),
\end{eqnarray*}
where $B_j=B^{(1)}_j$.
Using $D_{l}^{(n)}$, we can show
\begin{eqnarray}
\label{LiDeq}
{\rm Li}_{-n,k}(z)
&=& \sum_{1 \leq m_1 <m_2}^{} \frac{m_1^{n}z^{m_2}}{m_2^{k}} \nonumber \\
&=& \sum_{m_2=2}^{\infty} \frac{z^{m_2}}{m_2^{k}} \left( D_0^{(n)} + D_1^{(n)}m_2 + D_2^{(n)}m_2^{2} + \cdots + D_{n+1}^{(n)}m_2^{n+1} \right) \nonumber \\
&=& \sum_{l=0}^{n+1} D_l^{(n)} {\rm Li}_{k-l}(z) -z\sum_{l=0}^{n+1} D_l^{(n)},
\end{eqnarray}
where
\begin{equation*}
\sum_{l=0}^{n+1} D_l^{(n)}
= \sum_{l=1}^{n+1} \frac{n!(-1)^{n-l+1}}{l!(n-l+1)!}B_{n-l+1}
= 0
\end{equation*}
for $n>0$, and
\begin{equation*}
\sum_{l=0}^{n+1} D_{l}^{(n)}
= \sum_{l=0}^{1} D_{l}^{(0)}
= 0
\end{equation*}
for $n=0$.
Hence we obtain the proof.
\end{proof}

\begin{ex} \label{exeta(-n,k;s)}
By Definition \ref{eta(-n,k;s)} and Lemma \ref{Li_{-n,k}}, we can derive
\begin{align*}
&\eta(0,1;s) = -s\zeta(s+1)+1, \\
&\eta(-1,3;s) = -\frac{1}{4}(s^2+s+2)\zeta(s+2)-\frac{1}{2}\zeta(s,2)+\frac{1}{2}s\zeta(1,s+1)+\frac{1}{2}s\zeta(s+1), \\
&\eta(-1,1;s) = \frac{1}{4}\frac{1}{2^{s-1}}-\frac{1}{2}.
\end{align*}
Setting $s=1$,
\begin{align*}
&\eta(0,1;1) = -\zeta(2)+1, \\ 
&\eta(-1,3;1)=-\zeta(3)+\frac{1}{2}\zeta(2), \\
&\eta(-1,1;1) = -\frac{1}{4}.
\end{align*}
\end{ex}

\begin{thm} \label{eta(-n,k;-m)}
The function $\eta(-n,k;s)$ can be analytically continued as an entire function, and the values  at nonpositive integers are given by
\begin{equation}
\eta(-n,k;-m) = B_{m}^{(-n,k)} \ \ \ \ (m \in \mathbb{Z}_{\geq0}).
\end{equation}
\end{thm}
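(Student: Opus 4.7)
The argument will be the exact contour analogue of the proof of Theorem \ref{eta(k,-n;-m)}, with Lemma \ref{Li_{-n,k}} taking the role that Lemma \ref{Li_{k,-n}} played there. Concretely, I would take $C$ to be the standard Hankel contour (the positive real axis from $\infty$ to $\varepsilon$, the counter-clockwise circle $C_{\varepsilon}$, and the positive real axis back to $\infty$), and set
\[
H(-n,k;s) = \int_{C} t^{s-1}\,\frac{{\rm Li}_{-n,k}(1-e^{t})}{1-e^{t}}\,dt
= (e^{2\pi i s}-1)\int_{\varepsilon}^{\infty} t^{s-1}\,\frac{{\rm Li}_{-n,k}(1-e^{t})}{1-e^{t}}\,dt
+ \int_{C_{\varepsilon}} t^{s-1}\,\frac{{\rm Li}_{-n,k}(1-e^{t})}{1-e^{t}}\,dt.
\]
By Lemma \ref{Li_{-n,k}}, the quotient ${\rm Li}_{-n,k}(1-e^{t})/(1-e^{t})$ is a finite $\mathbb{R}$-linear combination of terms ${\rm Li}_{k-l}(1-e^{t})/(1-e^{t})$. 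Each of these is holomorphic in a neighbourhood of the positive real axis (the only potential pole of ${\rm Li}_{k-l}$ at argument $1$ would require $1-e^{t}=1$, which never occurs on $C$, and division by $1-e^{t}$ removes the simple zero of ${\rm Li}_{k-l}$ at the origin). Combined with the exponential decay as $t\to+\infty$ coming from $(1-e^{t})^{-1}$, this shows that the integrand is regular on $C$ and that $H(-n,k;s)$ is an entire function of $s$.

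For $\Re(s)>0$ the circular contribution $\int_{C_{\varepsilon}}$ vanishes as $\varepsilon\to 0$, so
\[
\eta(-n,k;s) = \frac{H(-n,k;s)}{(e^{2\pi i s}-1)\Gamma(s)}.
\]
The right-hand side is a priori meromorphic on $\mathbb{C}$ with possible poles only at positive integers (since the zeros of $1/\Gamma(s)$ at $s=0,-1,-2,\ldots$ cancel the corresponding poles of $1/(e^{2\pi i s}-1)$). But $\eta(-n,k;s)$ is holomorphic on $\Re(s)>0$ by absolute convergence, so these positive-integer candidates must be removable, and the right-hand side extends $\eta(-n,k;s)$ to an entire function.

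Finally, to read off values at $s=-m$, I would use
\[
\frac{1}{(e^{2\pi i s}-1)\Gamma(s)}\xrightarrow[s\to -m]{}\frac{(-1)^{m}m!}{2\pi i},
\]
together with the fact that at $s=-m$ the two real-axis pieces of $H$ cancel because $t^{-m-1}$ is single-valued, leaving $H(-n,k;-m)=\oint_{|t|=\varepsilon} t^{-m-1}\,{\rm Li}_{-n,k}(1-e^{t})/(1-e^{t})\,dt$, which equals $2\pi i$ times the coefficient of $t^{m}$ in the Taylor expansion of ${\rm Li}_{-n,k}(1-e^{t})/(1-e^{t})$ at $t=0$. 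Comparing this expansion with the defining series of $B_{m}^{(-n,k)}$ via the substitution $t\mapsto -t$ gives the equality $\eta(-n,k;-m)=B_{m}^{(-n,k)}$. There is no real obstacle beyond verifying regularity on $C$, since once Lemma \ref{Li_{-n,k}} is in hand the scheme of Theorem \ref{eta(k,-n;-m)} transplants verbatim; if anything deserves care, it is checking that no spurious pole of ${\rm Li}_{k-l}$ with $k-l\le 0$ ever meets the contour, which is immediate from $1-e^{t}\ne 1$ on $C$.
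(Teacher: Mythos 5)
Your proposal is correct and is exactly the argument the paper intends: the paper's proof of this theorem simply says ``similar to Theorem \ref{eta(k,-n;-m)}'', and you have carried out precisely that Hankel-contour scheme, with Lemma \ref{Li_{-n,k}} supplying the regularity and decay of ${\rm Li}_{-n,k}(1-e^{t})/(1-e^{t})$ on the contour and the final residue computation matching the generating function of $B_{m}^{(-n,k)}$ after $t\mapsto -t$. No gaps.
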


\begin{proof}
Similar to Theorem \ref{eta(k,-n;-m)}, we can obtain the proof.
\end{proof}

\begin{thm} \label{eta(-n,k;m+1)}
For $m \in \mathbb{Z}_{\geq0}$, $k \in \mathbb{Z}_{\geq1}$ and $n \in \mathbb{Z}_{\geq0}$ with $k>n$, we have
\begin{equation}
\label{eta(-n,k;m+1)eq1}
\eta(-n,k;m+1) = \sum_{l=0}^{n+1} D_{l}^{(n)} \sum_{\substack{a_1+\cdots+a_{k-l}=m \\ \forall a_j\geq0}} (a_{k-l}+1) \zeta^{\star}(a_1+1,\ldots,a_{k-l-1}+1,a_{k-l}+2),
\end{equation}
and with $k \leq n$, we have
\begin{eqnarray}
\label{eta(-n,k;m+1)eq2}
\eta(-n,k;m+1) 
&=& \sum_{l=0}^{k-1} D_{l}^{(n)} \sum_{\substack{a_1+\cdots+a_{k-l}=m \\ \forall a_j\geq0}} (a_{k-l}+1) \zeta^{\star}(a_1+1,\ldots,a_{k-l-1}+1,a_{k-l}+2) \nonumber \\
&& + \sum_{l=k}^{n+1} D_{l}^{(n)} B_{-k+l}^{(m+1)}.
\end{eqnarray}
\end{thm}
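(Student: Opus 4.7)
The plan is to feed the decomposition of Lemma \ref{Li_{-n,k}} directly into the integral defining $\eta(-n,k;s)$. Substituting $\mathrm{Li}_{-n,k}(1-e^t) = \sum_{l=0}^{n+1} D_l^{(n)} \mathrm{Li}_{k-l}(1-e^t)$ and swapping the finite sum with the integral (justified by the absolute convergence already observed before Theorem \ref{eta(-n,k;-m)}) reduces the problem to
\begin{equation*}
\eta(-n,k;s) = \sum_{l=0}^{n+1} D_{l}^{(n)} \cdot \frac{1}{\Gamma(s)} \int_{0}^{\infty} t^{s-1} \frac{\mathrm{Li}_{k-l}(1-e^t)}{1-e^t}\,dt.
\end{equation*}
Each integral on the right is a single-variable $\eta$-integral in disguise, so the rest of the proof is a case analysis on the sign of $k - l$.

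For each index $l$ with $k - l \geq 1$, the integral is exactly $\eta(k-l;s)$ in the sense of (\ref{eta+eq}). Setting $s = m+1$ and applying the $r = 1$ specialization of formula (\ref{etam+1eq}) yields
\begin{equation*}
\eta(k-l; m+1) = \sum_{a_1 + \cdots + a_{k-l} = m} (a_{k-l}+1)\,\zeta^\star(a_1+1, \ldots, a_{k-l-1}+1, a_{k-l}+2),
\end{equation*}
which is precisely the MZSV contribution appearing in (\ref{eta(-n,k;m+1)eq1}) and in the first sum of (\ref{eta(-n,k;m+1)eq2}). For each index $l$ with $k - l \leq 0$, I would write $j = l - k \geq 0$; the corresponding integral is then $\eta(-j;s)$ in the sense of (\ref{eta-eq}), and combining (\ref{etaseq}) at $s = m+1$ with the $r = 1$ reduction $\mathfrak{B}_j^{(s)} = B_j^{(s)}$ stated in the introduction produces $\eta(-j; m+1) = B_{l-k}^{(m+1)}$, matching the second sum of (\ref{eta(-n,k;m+1)eq2}).

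Finally, one splits the range $0 \leq l \leq n+1$ at $l = k$. When $k > n$ every index lies in the first regime, so only MZSV terms survive and (\ref{eta(-n,k;m+1)eq1}) follows; when $k \leq n$ the indices $l = 0, \ldots, k-1$ contribute MZSV terms while the indices $l = k, \ldots, n+1$ contribute poly-Bernoulli terms, giving (\ref{eta(-n,k;m+1)eq2}). The main obstacle is not analytic---Lemma \ref{Li_{-n,k}} together with the known single-variable identities (\ref{etam+1eq}) and (\ref{etaseq}) already do the essential work---but combinatorial: one must keep careful track of how the shift $k - l$ changes sign, and align the indexing conventions coming from (\ref{etam+1eq}) and (\ref{etaseq}) with those of the claimed formula.
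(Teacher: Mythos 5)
Your proposal is correct and follows essentially the same route as the paper: substitute the decomposition of Lemma \ref{Li_{-n,k}} into the defining integral, identify each term as a single-index $\eta$-value, and apply (\ref{etam+1eq}) when $k-l\geq 1$ and (\ref{etaseq}) with (\ref{BB}) when $k-l\leq 0$, splitting the sum at $l=k$. The only point to watch is the degenerate index $k-l=0$ in the case $k=n+1$, which the paper disposes of in the remark following the theorem rather than in the proof itself.
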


\begin{proof}
By Lemma \ref{Li_{-n,k}}, we can see that
\begin{eqnarray*}
\eta(-n,k;m+1) 
&=& \frac{1}{\Gamma(m+1)} \int_{0}^{\infty} t^{m} \frac{{\rm Li}_{-n,k}(1-e^t)}{1-e^t}dt \\
&=& \frac{1}{\Gamma(m+1)} \int_{0}^{\infty} t^{m} \frac{\sum_{l=0}^{n+1} D_{l}^{(n)} {\rm Li}_{k-l}(1-e^t)}{1-e^t}dt \\
&=&\sum_{l=0}^{n+1} D_{l}^{(n)} \eta_{k-l}(m+1).
\end{eqnarray*}
In the case $k>n$, using (\ref{etam+1eq}), we obtain
\begin{eqnarray*}
\eta(-n,k;m+1) 
&=& \sum_{l=0}^{n+1} D_{l}^{(n)} \sum_{\substack{a_1+\cdots+a_{k-l}=m \\ \forall a_j\geq0}} (a_{k-l}+1) \zeta^{\star}(a_1+1,\ldots,a_{k-l-1}+1,a_{k-l}+2).
\end{eqnarray*}
On the other hand, in the case $k \leq n$, using (\ref{etam+1eq}), (\ref{etaseq}) and (\ref{BB}), we derive
\begin{eqnarray*}
\eta(-n,k;m+1) 
&=& \sum_{l=0}^{k-1} D_{l}^{(n)} \eta_{k-l}(m+1) + \sum_{l=k}^{n+1} D_{l}^{(n)} \eta_{k-l}(m+1) \\
&=& \sum_{l=0}^{k-1} D_{l}^{(n)} \sum_{\substack{a_1+\cdots+a_{k-l}=m \\ \forall a_j\geq0}} (a_{k-l}+1) \zeta^{\star}(a_1+1,\ldots,a_{k-l-1}+1,a_{k-l}+2) \\
&& + \sum_{l=k}^{n+1} D_{l}^{(n)} \mathfrak{B}_{-k+l}^{(m+1)},
\end{eqnarray*}
where ${\mathfrak{B}}_{-k+l}^{(m+1)} = B_{-k+l}^{(m+1)}$.
\end{proof}

\begin{rem}
In Theorem \ref{eta(-n,k;m+1)}, we denote the right-hand side of (\ref{eta(-n,k;m+1)eq1}) by $\sum_{l=0}^{n+1}S_{l}$. If $n=k-1$, we define $S_{n+1}=S_{k}=D_{k}^{(n)}$.
\end{rem}

\begin{ex}
We have $D_0^{(0)}=-1$ and $D_1^{(0)}=1$.
Hence, if $n=0$ and $k=1$ in Theorem \ref{eta(-n,k;m+1)}, we have
\begin{eqnarray*}
\eta(0,1;m+1)
&=& \sum_{l=0}^{1} D_l^{(0)} \sum_{\substack{a_1+\cdots+a_{1-l}=m \\ \forall a_j\geq0}} (a_{1-l}+1) \zeta^{\star}(a_1+1,\ldots,a_{-l}+1,a_{-l+1}+2) \\
&=& - \sum_{a_1=m}^{} \zeta^{\star}(a_1+2) +1.
\end{eqnarray*}
In particular $\eta(0,1;1)=-\zeta(2)+1$, which corresponds to the result in Example \ref{exeta(-n,k;s)}.
\end{ex}


\subsection{$\eta(1, \ldots, 1,-n;s)$}

In this section, we construct the formula similar to (\ref{xim+1eq}) and (\ref{etam+1eq}).

\begin{defi} \label{eta(1,...,1,-n;s)}
For $r \in \mathbb{Z}_{\geq1}$ and $n \in \mathbb{Z}_{\geq0}$, define
\begin{equation}
\label{eta(1,...,1,-n;s)eq}
\eta(\underbrace{1, \ldots ,1}_{r-1},-n;s) = \frac{1}{\Gamma(s)} \int_{0}^{\infty} \frac{t^{s-1}}{1-e^t} {\rm Li}_{\underbrace{\scriptstyle{1, \ldots ,1}}_{r-1},\scriptstyle{-n}}(1-e^t)dt
\end{equation}
for $s \in \mathbb{C}$ with $\Re(s)>0$.
\end{defi}

The integral on the right-hand side converges absolutely in the domain $\Re(s)>0$, as is seen from the following lemma.

\begin{lemma} \label{Li_{1,...,1,-n}}
For $r \in \mathbb{Z}_{\geq1}$ and $n \in \mathbb{Z}_{\geq0}$ with $r>n+1$, we have
\begin{eqnarray}
\label{Li_{1,...,1,-n}eq}
{\rm Li}_{\underbrace{1, \ldots ,1}_{r-1},-n}(z)
&=& z \left\{ \frac{Q_1^{(n)}(z)}{(1-z)^{n+1}}{\rm Li}_{\underbrace{1, \ldots ,1}_{r-1}}(z) + \frac{Q_2^{(n)}(z)}{(1-z)^{n+1}}{\rm Li}_{\underbrace{1, \ldots ,1}_{r-2}}(z) + \cdots \right. \nonumber \\
&& \left. + \cdots + \frac{Q_n^{(n)}(z)}{(1-z)^{n+1}}{\rm Li}_{\underbrace{1, \ldots ,1}_{r-n}}(z) + \frac{Q_{n+1}^{(n)}(z)}{(1-z)^{n+1}}{\rm Li}_{\underbrace{1, \ldots ,1}_{r-n-1}}(z) \right\},
\end{eqnarray}
where $Q_i^{(n)}(z)$ is the polynomial defined by
\begin{equation}
Q_i^{(n)}(z) = \sum_{k=i}^{n+1} \sum_{l=0}^{n-k+1} (-1)^l \stirii{n+1}{k} \stiri{k}{i} \binom{n-k+1}{l} z^{k+l-1}
\end{equation}
for  $1 \leq i \leq n+1$, and  $\stiri{k}{m}$ is the Stirling number of the first kind and $\stirii{n}{k}$ is the second kind.
\end{lemma}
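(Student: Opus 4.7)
The plan is to prove the identity by induction on $n$, after a change of variable that absorbs the $(1-z)^{-1}$ factors. Set $Y := z/(1-z)$, so that $(1-z)^{-1} = 1+Y$, and abbreviate $L_j := {\rm Li}_{\underbrace{1,\ldots,1}_{j}}(z) = (-\log(1-z))^j/j!$ for $j\geq 0$ (with $L_0 = 1$). The Euler operator $\theta := z\,d/dz$ is a derivation satisfying $\theta Y = z/(1-z)^2 = Y(1+Y)$ and $\theta L_j = Y\,L_{j-1}$ for $j\geq 1$; the latter is the elementary identity ${\rm Li}_{\underbrace{1,\ldots,1}_{j-1},0}(z) = Y\,L_{j-1}$. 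Since $\theta$ lowers the rightmost index of a multiple polylogarithm by one, ${\rm Li}_{\underbrace{1,\ldots,1}_{r-1},-n}(z) = \theta^n(Y L_{r-1})$, and the lemma is equivalent to
\[
\theta^n(Y L_{r-1}) = \sum_{i=1}^{n+1}\phi_i^{(n)}(Y)\,L_{r-i},\qquad \phi_i^{(n)}(Y) := \sum_{k=i}^{n+1}\stirii{n+1}{k}\stiri{k}{i}Y^k.
\]

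I would then establish this by induction on $n$. The base case $n=0$ reduces to $Y L_{r-1} = \phi_1^{(0)}(Y)L_{r-1}$ with $\phi_1^{(0)}(Y) = \stirii{1}{1}\stiri{1}{1}Y = Y$, which is immediate. For the inductive step, applying $\theta$ as a derivation and substituting $\theta Y = Y(1+Y)$ and $\theta L_{r-i} = Y L_{r-i-1}$ gives
\[
\theta\sum_{i=1}^{n+1}\phi_i^{(n)}(Y)L_{r-i} = \sum_{i=1}^{n+2}\Bigl[Y(1+Y)\,\phi_i^{(n)\prime}(Y) + Y\,\phi_{i-1}^{(n)}(Y)\Bigr]L_{r-i},
\]
with the conventions $\phi_0^{(n)}\equiv 0$ and $\phi_{n+2}^{(n)}\equiv 0$. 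Reading off the coefficient of $Y^k$ in the desired recurrence $\phi_i^{(n+1)} = Y(1+Y)\phi_i^{(n)\prime}+Y\phi_{i-1}^{(n)}$ reduces the step to
\[
\stirii{n+2}{k}\stiri{k}{i} = k\stirii{n+1}{k}\stiri{k}{i} + (k-1)\stirii{n+1}{k-1}\stiri{k-1}{i} + \stirii{n+1}{k-1}\stiri{k-1}{i-1},
\]
which follows in one line from the classical recurrences $\stirii{n+2}{k} = k\stirii{n+1}{k}+\stirii{n+1}{k-1}$ and $\stiri{k}{i} = (k-1)\stiri{k-1}{i}+\stiri{k-1}{i-1}$.

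To recover the statement as written, I convert from $Y$ back to $z$: writing $Y^k = z^k/(1-z)^k = z\cdot z^{k-1}(1-z)^{n+1-k}/(1-z)^{n+1}$ and expanding $(1-z)^{n+1-k} = \sum_{l=0}^{n-k+1}(-1)^l\binom{n-k+1}{l}z^l$ turns $\phi_i^{(n)}(Y)$ into precisely $zQ_i^{(n)}(z)/(1-z)^{n+1}$ with $Q_i^{(n)}(z)$ as defined in the lemma. The hypothesis $r>n+1$ is used only to guarantee $r-i\geq 1$ for every $i\in\{1,\dots,n+1\}$, so that each $L_{r-i}$ is a genuine multiple polylogarithm and the identity $\theta L_{r-i} = Y L_{r-i-1}$ is applicable at each stage of the induction. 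The main obstacle is the Stirling identity in the inductive step, but after the reformulation in terms of $Y$, $\theta$, and $L_j$ it collapses via the two standard recurrences; without this reformulation, the bookkeeping of the mixed $P/(1-z)^{n+1}$ prefactors against polylogarithms of different lengths would be considerably messier.
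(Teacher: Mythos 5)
Your proof is correct, and it reorganizes the paper's argument rather than reproducing it. The paper also starts from ${\rm Li}_{1,\ldots,1,-n}(z)=\left(z\,\frac{d}{dz}\right)^{n+1}{\rm Li}_{1,\ldots,1}(z)$, but it then splits the computation into two separate classical ingredients: the operator expansion $\left(z\,\frac{d}{dz}\right)^{n+1}=\sum_{k=1}^{n+1}\stirii{n+1}{k}z^k\left(\frac{d}{dz}\right)^k$ (second-kind Stirling numbers), and an auxiliary lemma, proved by induction on $k$ via the recurrence $\stiri{k+1}{m}=\stiri{k}{m-1}+k\stiri{k}{m}$, expressing $\left(\frac{d}{dz}\right)^k{\rm Li}_{1,\ldots,1}(z)$ as $(1-z)^{-k}$ times a first-kind-Stirling combination of shorter iterated logarithms; multiplying the two expansions produces the product $\stirii{n+1}{k}\stiri{k}{i}$ automatically, and the binomial expansion of $(1-z)^{n-k+1}$ then gives $Q_i^{(n)}$. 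You instead run a single induction on $n$ on the full coefficient array $\phi_i^{(n)}(Y)=\sum_{k}\stirii{n+1}{k}\stiri{k}{i}Y^k$ in the variable $Y=z/(1-z)$, which fuses the two classical recurrences into the one three-term identity you check. Both arguments rest on exactly the same Euler-operator facts ($\theta Y=Y(1+Y)$, $\theta L_j=YL_{j-1}$) and the same two Stirling recurrences; what your version buys is that the $(1-z)$-power bookkeeping disappears into $Y$ and no appeal to the operator identity for $\theta^{n+1}$ is needed, at the cost of having to posit the closed form $\stirii{n+1}{k}\stiri{k}{i}$ in advance rather than watching it emerge from composing two known expansions. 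Your reading of the hypothesis $r>n+1$ (it keeps every $L_{r-i}$, $1\le i\le n+1$, a genuine nonconstant iterated logarithm throughout the induction) agrees with the role it plays in the paper.
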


In order to prove the lemma, we show the following.

\begin{lemma} \label{ddzLi}
For $r$ and $k \in \mathbb{Z}_{\geq1}$ with $r>k+1$, we have
\[
\left( \frac{d}{dz} \right)^k {\rm Li}_{\underbrace{1, \ldots ,1}_{r}}(z) = \sum_{m=1}^{k} \frac{1}{(1-z)^k} \stiri{k}{m} {\rm Li}_{\underbrace{1, \ldots ,1}_{r-m}}(z).
\]
\end{lemma}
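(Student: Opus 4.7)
The plan is to prove Lemma \ref{ddzLi} by induction on $k$, using only the elementary differential relation for $\mathrm{Li}_{\{1\}^{r}}$ together with the standard recurrence for the (unsigned) Stirling numbers of the first kind.

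For the base case $k=1$, I first establish
\[
\frac{d}{dz}\,\mathrm{Li}_{\underbrace{\scriptstyle 1,\ldots,1}_{r}}(z) \;=\; \frac{1}{1-z}\,\mathrm{Li}_{\underbrace{\scriptstyle 1,\ldots,1}_{r-1}}(z),
\]
which follows by differentiating the series term by term in the outermost index $m_{r}$, namely
\[
\sum_{0<m_{1}<\cdots<m_{r}}\frac{z^{m_{r}-1}}{m_{1}\cdots m_{r-1}}
=\sum_{0<m_{1}<\cdots<m_{r-1}}\frac{1}{m_{1}\cdots m_{r-1}}\sum_{m_{r}>m_{r-1}}z^{m_{r}-1}
=\frac{1}{1-z}\,\mathrm{Li}_{\underbrace{\scriptstyle 1,\ldots,1}_{r-1}}(z).
\]
Since $\stiri{1}{1}=1$, this matches the claim for $k=1$.

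For the inductive step, I assume the identity at level $k$ and apply $d/dz$. By the product rule,
\[
\frac{d}{dz}\!\left[\frac{1}{(1-z)^{k}}\mathrm{Li}_{\underbrace{\scriptstyle 1,\ldots,1}_{r-m}}(z)\right]
=\frac{k}{(1-z)^{k+1}}\mathrm{Li}_{\underbrace{\scriptstyle 1,\ldots,1}_{r-m}}(z)+\frac{1}{(1-z)^{k+1}}\mathrm{Li}_{\underbrace{\scriptstyle 1,\ldots,1}_{r-m-1}}(z),
\]
where I used the base case in the second term (the hypothesis $r>k+1$ guarantees $r-m-1\geq 1$ throughout the range of $m$). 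Summing against $\stiri{k}{m}$ and shifting $m\mapsto m-1$ in the second sum, I collect the coefficient of $\mathrm{Li}_{\{1\}^{r-m}}(z)/(1-z)^{k+1}$ as
\[
k\stiri{k}{m}+\stiri{k}{m-1},
\]
with the convention $\stiri{k}{0}=\stiri{k}{k+1}=0$.

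The final step is to recognize this as the standard recurrence
\[
\stiri{k+1}{m}=k\stiri{k}{m}+\stiri{k}{m-1}
\]
for the unsigned Stirling numbers of the first kind, which gives precisely the stated formula at level $k+1$. The only point requiring any care is the bookkeeping of the index shift and the boundary terms $m=1$ and $m=k+1$; both are absorbed by the Stirling-number convention. No analytic difficulty arises, since all operations take place inside $|z|<1$ where the series and its termwise derivatives converge.
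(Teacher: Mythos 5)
Your proof is correct and follows essentially the same route as the paper: induction on $k$ with the base case $\frac{d}{dz}\,{\rm Li}_{\{1\}^{r}}(z)=\frac{1}{1-z}\,{\rm Li}_{\{1\}^{r-1}}(z)$ and the Stirling recurrence $\stiri{k+1}{m}=k\stiri{k}{m}+\stiri{k}{m-1}$ to collect coefficients in the inductive step. The index bookkeeping and the role of the hypothesis $r>k+1$ are handled as in the paper, so nothing further is needed.
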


\begin{proof}
We prove this lemma by induction.
If $k=1$, then we have
\begin{eqnarray*}
\frac{d}{dz} {\rm Li}_{\underbrace{1, \ldots ,1}_{r}}(z)
&=& \frac{1}{1-z} {\rm Li}_{\underbrace{1, \ldots ,1}_{r-1}}(z).
\end{eqnarray*}
For $k \geq 1$, we assume the case of $k$ holds and consider the case of $k+1$ by using the relational expression $\stiri{n+1}{m}=\stiri{n}{m-1}+n\stiri{n}{m}$.
By induction hypothesis, we have
\begin{eqnarray*}
\left( \frac{d}{dz} \right)^{k+1} {\rm Li}_{\underbrace{1, \ldots ,1}_{r}}(z)
&=& \frac{d}{dz} \left( \sum_{m=1}^{k} \frac{1}{(1-z)^k} \stiri{k}{m}  {\rm Li}_{\underbrace{1, \ldots ,1}_{r-m}}(z) \right) \\
&=& \frac{1}{(1-z)^{k+1}} \left( k \sum_{m=1}^{k} \stiri{k}{m} {\rm Li}_{\underbrace{1, \ldots ,1}_{r-m}}(z) + \sum_{m=2}^{k+1} \stiri{k}{m-1}{\rm Li}_{\underbrace{1, \ldots ,1}_{r-m}}(z) \right) \\
&=&  \frac{1}{(1-z)^{k+1}} \left( k \stiri{k}{1} {\rm Li}_{\underbrace{1, \ldots ,1}_{r-1}}(z) \right. \\
&& \left. + \sum_{m=2}^{k} \left( k \stiri{k}{m} + \stiri{k}{m-1} \right)  {\rm Li}_{\underbrace{1, \ldots ,1}_{r-m}}(z) + \stiri{k}{k}  {\rm Li}_{\underbrace{1, \cdots ,1}_{r-k-1}}(z) \right) \\
&=& \sum_{m=1}^{k+1} \frac{1}{(1-z)^{k+1}} \stiri{k+1}{m} {\rm Li}_{\underbrace{1, \ldots ,1}_{r-m}}(z) .
\end{eqnarray*}
\end{proof}

\noindent
{\it Proof of Lemma \ref{Li_{1,...,1,-n}}.}
When $r>n+1$, we have
\begin{eqnarray*}
{\rm Li}_{\underbrace{1, \ldots ,1}_{r-1},-n}(z)
&=& \left( z \frac{d}{dz} \right)^{n+1} {\rm Li}_{\underbrace{1, \ldots ,1}_{r}}(z) \\
&=& \sum_{k=1}^{n+1} \stirii{n+1}{k} z^k \left( \frac{d}{dz} \right)^k {\rm Li}_{\underbrace{1, \ldots ,1}_{r}}(z).
\end{eqnarray*}
By using Lemma \ref{ddzLi}, we derive
\begin{eqnarray*}
{\rm Li}_{\substack{\underbrace{1, \ldots ,1}_{r-1},-n}}(z)
&=& \sum_{k=1}^{n+1} \stirii{n+1}{k} z^k \frac{1}{(1-z)^k} \sum_{i=1}^{k} \stiri{k}{i} {\rm Li}_{\underbrace{1, \ldots ,1}_{r-i}}(z) \\
&=& \frac{z}{(1-z)^{n+1}} \sum_{i=1}^{n+1} \sum_{k=i}^{n+1} z^{k-1}(1-z)^{n-k+1} \stirii{n+1}{k} \stiri{k}{i} {\rm Li}_{\underbrace{1, \ldots ,1}_{r-i}}(z).
\end{eqnarray*}
Therefore we can see that (\ref{Li_{1,...,1,-n}eq}) holds and $Q_i^{(n)}(z)$ is the polynomial written as
\begin{eqnarray*}
Q_{i}^{(n)}(z)
&=& \sum_{k=i}^{n+1} z^{k-1}(1-z)^{n-k+1} \stirii{n+1}{k} \stiri{k}{i} \\
&=& \sum_{k=i}^{n+1} \sum_{l=0}^{n-k+1} (-1)^{l} \stirii{n+1}{k} \stiri{k}{i} \binom{n-k+1}{l} z^{k+l-1}.
\end{eqnarray*}
\qed

\begin{ex} \label{exeta(1,...,1,-n;s)}
By Definition \ref{eta(1,...,1,-n;s)} and Lemma \ref{Li_{1,...,1,-n}}, we can derive
\begin{align*}
&\eta(1,1,-1;s) = \frac{(s+1)s}{2^{s+3}}-\frac{s}{2^{s+1}}+s.
\end{align*}
Setting $s=1$,
\begin{align*}
&\eta(1,1,-1;1) = \frac{7}{8}.
\end{align*}
\end{ex}

\begin{thm} \label{eta(1,...,1,-n;-m)}
The function $\eta(1,\ldots,1,-n;s)$ can be analytically continued as an entire function, and the values at nonpositive integers are given by
\begin{equation}
\eta(\underbrace{1, \ldots ,1}_{r-1},-n;-m) = B_{m}^{(\overbrace{1, \ldots ,1}^{r-1},-n)} \ \ \ \ (m \in \mathbb{Z}_{\geq0}).
\end{equation}
\end{thm}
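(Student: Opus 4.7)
The plan is to adapt the Hankel-contour argument used for Theorem \ref{eta(k,-n;-m)} line-for-line, with Lemma \ref{Li_{1,...,1,-n}} playing the role of Lemma \ref{Li_{k,-n}}. Concretely, I would define
\[
H(\underbrace{1,\ldots,1}_{r-1},-n;s) = \int_{C} t^{s-1}\, \frac{{\rm Li}_{1,\ldots,1,-n}(1-e^{t})}{1-e^t}\,dt,
\]
where $C$ is the standard Hankel contour introduced in the proof of Theorem \ref{eta(k,-n;-m)}, and split $H$ in the usual way into the two edges of the branch cut (whose contributions differ by a factor $e^{2\pi is}$) and the small circle $C_\varepsilon$ about the origin.

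The first substantive step is to check that the integrand is holomorphic in a neighborhood of $C$ and that the contour integral converges absolutely for every $s\in\mathbb{C}$, so that $H$ is entire in $s$. By Lemma \ref{Li_{1,...,1,-n}}, after the substitution $z=1-e^{t}$, which sends $(1-z)^{n+1}$ to $e^{(n+1)t}$ and in which the leading factor $z$ cancels the $1-e^{t}$ in the denominator, the quotient ${\rm Li}_{1,\ldots,1,-n}(1-e^{t})/(1-e^{t})$ becomes a finite sum of terms of the form $e^{-(n+1)t}\,Q_{i}^{(n)}(1-e^{t})\,{\rm Li}_{1,\ldots,1}(1-e^{t})$. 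Since $Q_i^{(n)}$ has degree at most $n$ in its argument and ${\rm Li}_{1,\ldots,1}(1-e^{t})$ grows only polynomially in $t$, the integrand decays exponentially as $t\to+\infty$ and has no singularity on $C$.

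For $\Re(s)>0$ the small-circle contribution tends to $0$ as $\varepsilon\to 0$, giving
\[
\eta(\underbrace{1,\ldots,1}_{r-1},-n;s) = \frac{H(\underbrace{1,\ldots,1}_{r-1},-n;s)}{(e^{2\pi is}-1)\,\Gamma(s)}.
\]
Since $\eta$ is already holomorphic on $\Re(s)>0$ from its defining integral, the apparent poles at positive integers cancel, and the right-hand side furnishes the desired entire continuation. To evaluate at $s=-m$ I would use the standard limit $1/((e^{2\pi is}-1)\Gamma(s)) \to (-1)^{m}m!/(2\pi i)$, observe that at $s=-m$ the two edge contributions to $H$ cancel so that $H(\cdots;-m) = \int_{C_\varepsilon} t^{-m-1}f(t)\,dt = 2\pi i\cdot[t^{m}]f(t)$ for $f(t) = {\rm Li}_{1,\ldots,1,-n}(1-e^{t})/(1-e^{t})$, and invoke the generating-function identity $f(-t) = \sum_{m\geq 0} B_{m}^{(1,\ldots,1,-n)} t^{m}/m!$ (the defining relation of the multi-poly-Bernoulli numbers). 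Then $[t^{m}]f(t) = (-1)^{m}B_{m}^{(1,\ldots,1,-n)}/m!$, and multiplying the two sign/factorial factors yields $\eta(1,\ldots,1,-n;-m) = B_{m}^{(1,\ldots,1,-n)}$.

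The only genuinely nontrivial step is the entirety/growth check for $H$, which is supplied directly by Lemma \ref{Li_{1,...,1,-n}}; once that estimate is in hand, the rest of the argument is a literal transcription of the proof of Theorem \ref{eta(k,-n;-m)}.
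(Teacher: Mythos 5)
Your proposal is correct and is exactly the argument the paper intends: the paper's proof of this theorem is just the one-line remark that it is ``similar to Theorem \ref{eta(k,-n;-m)}'', i.e.\ the same Hankel-contour continuation, with Lemma \ref{Li_{1,...,1,-n}} supplying the decay estimate (since ${\rm Li}_{1,\ldots,1}(1-e^{t})$ is a polynomial in $t$ and $Q_i^{(n)}(1-e^t)=O(e^{nt})$ against the factor $e^{-(n+1)t}$). Your residue computation at $s=-m$ and the sign bookkeeping via $f(-t)=\sum_m B_m^{(1,\ldots,1,-n)}t^m/m!$ match the paper's.
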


\begin{proof}
Similar to Theorem \ref{eta(k,-n;-m)}, we can obtain this theorem.
\end{proof}

\begin{thm} \label{eta(1,...,1,-n;m+1)}
For $m \in \mathbb{Z}_{\geq0}$, $r \in \mathbb{Z}_{\geq1}$ and $n \in \mathbb{Z}_{\geq0}$ with $r>n+1$, we have
\begin{equation}
\label{eta(1,...,1,-n;m+1)eq}
\eta(\underbrace{1, \ldots ,1}_{r-1},-n;m+1) = \sum_{l=1}^{n+1}\sum_{j=0}^{n} \binom{m+r-l}{m} (-1)^{r-l} E_{l,j}^{(n)} \frac{1}{(n-j+1)^{m+r-l}},
\end{equation}
where $E_{l,j}^{(n)}$ is the rational number defined by
\begin{equation}
E_{l,j}^{(n)} = \sum_{M=j+1}^{n+1} \sum_{k=l}^{M} (-1)^{M-k+j} \stirii{n+1}{k} \stiri{k}{l} \binom{n-k+1}{M-k} \binom{M-1}{j}.
\end{equation}
\end{thm}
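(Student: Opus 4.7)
The plan is to reduce the integral to elementary exponential integrals by exploiting the fact that the non-negative part of the index is all ones. Substituting $z=1-e^t$ (so $1-z=e^t$) in Lemma~\ref{Li_{1,...,1,-n}} rewrites the integrand as
\[
\frac{{\rm Li}_{\underbrace{1, \ldots ,1}_{r-1},-n}(1-e^t)}{1-e^t}
= \sum_{l=1}^{n+1} \frac{Q_l^{(n)}(1-e^t)}{e^{(n+1)t}}\, {\rm Li}_{\underbrace{1, \ldots ,1}_{r-l}}(1-e^t).
\]
The key simplification comes from the classical closed form ${\rm Li}_{\underbrace{1, \ldots ,1}_{k}}(z)=(-\log(1-z))^k/k!$, which follows by induction from the $k=1$ case via Lemma~\ref{ddzLi}. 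Specialising at $z=1-e^t$ gives ${\rm Li}_{\underbrace{1, \ldots ,1}_{r-l}}(1-e^t)=(-1)^{r-l}t^{r-l}/(r-l)!$, which replaces the polylogarithm in the integrand by a bare monomial in $t$.

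Next, I would expand $Q_l^{(n)}(1-e^t)/e^{(n+1)t}$ explicitly as a finite sum of exponentials in $-t$. Writing out $Q_l^{(n)}(1-e^t)$ via Lemma~\ref{Li_{1,...,1,-n}} and applying the binomial theorem to each factor $(1-e^t)^{k+L-1}$ (with $L$ the inner binomial index in the definition of $Q_l^{(n)}$) produces a triple sum over $k$, $L$, and a new binomial index $j$. Introducing $M:=k+L$ and swapping the order of summation so that $j$ is the outermost index, with $M$ running from $\max(j+1,l)$ to $n+1$ and $k$ from $l$ to $M$, one checks that the coefficient of $e^{-(n-j+1)t}$ is exactly $E_{l,j}^{(n)}$.

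Substituting back into the definition of $\eta(\underbrace{1, \ldots ,1}_{r-1},-n;m+1)$ makes the integrand a finite linear combination of terms of the form $t^{m+r-l}e^{-(n-j+1)t}$, and the integral splits termwise into the elementary $\int_0^\infty t^N e^{-\alpha t}\,dt = N!/\alpha^{N+1}$. Using $(m+r-l)!/(m!(r-l)!)=\binom{m+r-l}{m}$ and collecting the surviving factors then yields the claimed sum.

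The main obstacle is combinatorial bookkeeping: tracking the four indices $l$, $k$, $L$, $M$ together with three independent sign sources, namely $(-1)^{r-l}$ from the monomial $(-t)^{r-l}$, $(-1)^L$ from the definition of $Q_l^{(n)}$, and $(-1)^j$ from the expansion of $(1-e^t)^{M-1}$, and then reindexing $M=k+L$ and swapping summation orders so that the collapsed inner triple sum precisely matches the definition of $E_{l,j}^{(n)}$.
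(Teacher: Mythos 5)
Your proposal is correct and follows essentially the same route as the paper: expand the integrand via Lemma \ref{Li_{1,...,1,-n}}, replace ${\rm Li}_{1,\ldots,1}(1-e^t)$ by $(-t)^{r-l}/(r-l)!$, rewrite $Q_l^{(n)}$ as a polynomial in $1-z=e^t$ (which is exactly how the paper defines $E_{l,j}^{(n)}$, via the same reindexing $M=k+L$), and evaluate the resulting elementary gamma integrals. The only differences are cosmetic: you make explicit the binomial expansion that the paper compresses into the single line ``by setting $Q^{(n)}_{i}(z)=\sum_{j}E^{(n)}_{i,j}(1-z)^j$ we obtain \dots''.
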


\begin{proof}
Suppose $r>n+1$.
By setting $Q^{(n)}_{i}(z)=\sum_{j=0}^{i}E^{(n)}_{i,j}(1-z)^j$ in Lemma \ref{Li_{1,...,1,-n}}, we obtain
\[
E_{i,j}^{(n)} =  \sum_{M=j+1}^{n+1} \sum_{k=i}^{M} (-1)^{M-k+j} \stirii{n+1}{k} \stiri{k}{i} \binom{n-k+1}{M-k} \binom{M-1}{j}.
\]
We can transform (\ref{eta(1,...,1,-n;s)eq}).
\begin{eqnarray*}
\eta(\underbrace{1, \ldots ,1}_{r-1},-n;s)
&=& \frac{1}{\Gamma(s)} \int_{0}^{\infty} \frac{t^{s-1}}{1-e^t} {\rm Li}_{\underbrace{1, \ldots ,1}_{r-1},-n}(1-e^t)dt \\
&=& \frac{1}{\Gamma(s)} \sum_{l=1}^{n+1} \sum_{j=0}^{n} E_{l,j}^{(n)} \int_{0}^{\infty} t^{s-1} e^{-t(n-j+1)} {\rm Li}_{\underbrace{1, \ldots,1}_{r-l}}(1-e^t) dt \\
&=& \frac{1}{\Gamma(s)} \sum_{l=1}^{n+1} \sum_{j=0}^{n} (-1)^{r-l} E_{l,j}^{(n)} \frac{1}{(r-l)!} \Gamma(s+r-l) \frac{1}{(n-j+1)^{s+r-l-1}}. \\
\end{eqnarray*}
Setting $s=m+1$, we have
\begin{eqnarray*}
\eta(\underbrace{1, \ldots ,1}_{r-1},-n;m+1)
&=& \frac{1}{m!} \sum_{l=1}^{n+1} \sum_{j=0}^{n} (-1)^{r-l} E_{l,j}^{(n)} \frac{1}{(r-l)!} (m+r-l)! \frac{1}{(n-j+1)^{m+r-l}} \\
&=& \sum_{l=1}^{n+1} \sum_{j=0}^{n} \binom{m+r-l}{m} (-1)^{r-l} E_{l,j}^{(n)} \frac{1}{(n-j+1)^{m+r-l}} .
\end{eqnarray*}
This completes the proof.
\end{proof}

\begin{rem}
If $r \leq n+1$, we can show a certain formula by using same method.
\end{rem}

\begin{ex}
We have $E_{1,0}^{(1)}=1,E_{1,1}^{(1)}=0,E_{2,0}^{(1)}=1,E_{2,1}^{(1)}=-1$.
Hence, if $r=3$ and $n=1$ in Theorem \ref{eta(1,...,1,-n;m+1)}, we have
\begin{eqnarray*}
\eta(1,1,-1;m+1)
&=& \sum_{l=1}^{2} \sum_{j=0}^{1} \binom{m-l+3}{m} (-1)^{-l+3} E_{l,j}^{(1)} \frac{1}{(-j+2)^{m-l+4}} \\
&=& \frac{(m+2)(m+1)}{2} \frac{1}{2^{m+3}} -(m+1)\frac{1}{2^{m+2}} +m+1.
\end{eqnarray*}
In particular $\eta(1,1,-1;1)=\frac{7}{8}$, which corresponds to the result in Example \ref{exeta(1,...,1,-n;s)}.
\end{ex}


\section{$\xi$ function}\label{sec-3}

In this section, we consider $\xi$ function, whose indices consist of positive integers and nonpositive integers.

\begin{rem}
For $k \in \mathbb{Z}_{\geq1}$ and $n \in \mathbb{Z}_{\geq0}$, $\xi(k,-n;s)$ cannot be defined.
In fact, even if $k=1$ and $n=0$, we see that
\begin{equation*}
\xi(1,0;s)
= \frac{1}{\Gamma(s)} \int_{0}^{\infty} t^{s-1} {\rm Li}_{1}(1-e^{-t}) dt
= \frac{1}{\Gamma(s)} \int_{0}^{\infty} t^s dt
\end{equation*}
which is not convergent for any $s\in\mathbb{C}$.
\end{rem}

On the other hand, we can give the following definition by Lemma \ref{Li_{-n,k}}.
\begin{defi} \label{xi(-n,k;s)}
For $k \in \mathbb{Z}_{\geq1}$ and $n \in \mathbb{Z}_{\geq0}$ with $k>n+1$, define
\begin{equation}
\label{xi(-n,k;s)eq}
\xi(-n,k;s) = \frac{1}{\Gamma(s)} \int_{0}^{\infty} t^{s-1} \frac{{\rm Li}_{-n,k}(1-e^{-t})}{e^t-1}dt
\end{equation}
for $s\in \mathbb{C}$ with $\Re(s)>0$.
\end{defi}

\begin{rem}
When  $k \leq n+1$, $\xi(-n,k;s)$ cannot be defined because we see 
\begin{eqnarray}
\label{xixxeq}
\xi(-n,k;s)
&=& \frac{1}{\Gamma(s)} \int_{0}^{\infty} t^{s-1} \frac{{\rm Li}_{-n,k}(1-e^{-t})}{e^t-1}dt \nonumber \\
&=& \frac{1}{\Gamma(s)} \int_{0}^{\infty} t^{s-1} \frac{1}{e^t-1} \sum_{l=0}^{n+1} D_l^{(n)} {\rm Li}_{k-l}(1-e^{-t})dt \nonumber \\
&=&  \sum_{l=0}^{n+1} D_l^{(n)} \xi(k-l;s).
\end{eqnarray}
\end{rem}

\begin{ex} \label{exxi(-n,k;s)}
By Definition \ref{xi(-n,k;s)} and Lemma \ref{Li_{-n,k}}, we can derive
\begin{align*}
&\xi(0,2;s) = -\zeta(s,2)-\zeta(s+2)+s\zeta(1,s+1)+s\zeta(s+1), \\
&\xi(-1,3;s) = -\frac{1}{2}\zeta(s,2)-\frac{1}{2}\zeta(s+2)+\frac{1}{2}s\zeta(1,s+1)+\frac{1}{2}s\zeta(s+1).
\end{align*}
Setting $s=1$,
\begin{align*}
&\xi(0,2;1) = -\zeta(3)+\zeta(2), \\
&\xi(-1,3,1) = -\frac{1}{2}\zeta(3)+\frac{1}{2}\zeta(2).
\end{align*}
\end{ex}

\begin{thm} \label{xi(-n,k;-m)}
When $k>n+1$, the function $\xi(-n,k;s)$ can be analytically continued as an entire function, and the values at nonpositive integers are given
\begin{equation}
\xi(-n,k;-m) = (-1)^m C_m^{(-n,k)} \ \ \ \ (m \in \mathbb{Z}_{\geq0}).
\end{equation}
\end{thm}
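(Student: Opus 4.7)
The plan is to exploit the decomposition already recorded in (\ref{xixxeq}),
\[
\xi(-n,k;s) = \sum_{l=0}^{n+1} D_l^{(n)} \, \xi(k-l;s),
\]
which rewrites $\xi(-n,k;s)$ as a finite linear combination of classical single-index Arakawa-Kaneko zeta functions. The hypothesis $k > n+1$ is exactly what ensures that every index $k-l$ appearing here satisfies $k-l \geq 1$, so each summand on the right is a well-defined single-index $\xi$-function in the sense of \cite{[AK]}.

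Invoking the classical fact that $\xi(k_0;s)$ is entire on $\mathbb{C}$ for every $k_0 \in \mathbb{Z}_{\geq 1}$ (quoted in the introduction), and noting that the displayed identity holds as an equality of holomorphic functions for $\Re(s)>0$, the right-hand side provides an entire extension of $\xi(-n,k;s)$ to all of $\mathbb{C}$. To evaluate at $s=-m$, I would substitute $\xi(k-l;-m)=(-1)^m C_m^{(k-l)}$ to obtain
\[
\xi(-n,k;-m) = (-1)^m \sum_{l=0}^{n+1} D_l^{(n)} C_m^{(k-l)}.
\]
It then remains to identify this as $(-1)^m C_m^{(-n,k)}$, i.e.\ to verify $C_m^{(-n,k)} = \sum_{l=0}^{n+1} D_l^{(n)} C_m^{(k-l)}$. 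This is a routine generating function comparison: applying Lemma \ref{Li_{-n,k}} with $z = 1 - e^{-t}$ and dividing by $e^t - 1$ gives
\[
\sum_{m=0}^{\infty} C_m^{(-n,k)} \frac{t^m}{m!} = \frac{{\rm Li}_{-n,k}(1-e^{-t})}{e^t-1} = \sum_{l=0}^{n+1} D_l^{(n)} \frac{{\rm Li}_{k-l}(1-e^{-t})}{e^t-1} = \sum_{l=0}^{n+1} D_l^{(n)} \sum_{m=0}^{\infty} C_m^{(k-l)} \frac{t^m}{m!},
\]
and comparing coefficients of $t^m/m!$ completes the identification.

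There is no substantive analytic obstacle here; the essential work has already been carried out in Lemma \ref{Li_{-n,k}} and in the original Arakawa-Kaneko entire continuation. An alternative, more self-contained route would be the Hankel contour argument used for Theorem \ref{eta(k,-n;-m)}, replacing $1/(1-e^t)$ by $1/(e^t-1)$ and using Lemma \ref{Li_{-n,k}} only to see that the integrand is regular on the contour; but Lemma \ref{Li_{-n,k}} makes the reduction to the single-index case so transparent that the decomposition approach is clearly preferable.
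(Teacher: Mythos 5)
Your proof is correct, but it takes a genuinely different route from the paper. The paper disposes of this theorem by saying it is ``similar to Theorem \ref{eta(k,-n;-m)}'', i.e.\ it reruns the Hankel contour argument: one forms $H(-n,k;s)=\int_C t^{s-1}\,{\rm Li}_{-n,k}(1-e^{-t})/(e^t-1)\,dt$, observes via Lemma \ref{Li_{-n,k}} that the integrand is regular on the contour so that $H$ is entire, writes $\xi(-n,k;s)=H(-n,k;s)/\bigl((e^{2\pi is}-1)\Gamma(s)\bigr)$, and reads off the value at $s=-m$ from the residue of the generating series of $C_m^{(-n,k)}$. You instead use the finite decomposition (\ref{xixxeq}), $\xi(-n,k;s)=\sum_{l=0}^{n+1}D_l^{(n)}\xi(k-l;s)$, which under the hypothesis $k>n+1$ involves only classical single-index Arakawa--Kaneko functions with index $k-l\geq 1$; you then import their entireness and the evaluation $\xi(k_0;-m)=(-1)^mC_m^{(k_0)}$ from \cite{[AK]}, and close the argument with the coefficient comparison $C_m^{(-n,k)}=\sum_l D_l^{(n)}C_m^{(k-l)}$, which follows at once from Lemma \ref{Li_{-n,k}} applied to the generating function of the $C$-numbers. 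Both arguments are sound. Your decomposition approach is shorter and makes transparent \emph{why} the hypothesis $k>n+1$ appears (it is exactly the condition that every index $k-l$ stays positive), at the cost of relying on the classical continuation as an external input; the paper's contour approach is self-contained and, more importantly, uniform --- it also covers cases such as $\eta(k,-n;s)$ where no decomposition into single-index functions with all-positive indices is available, which is presumably why the author phrases all these continuation theorems the same way.
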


\begin{proof}
Similar to Theorem \ref{eta(k,-n;-m)}, we can obtain this theorem.
\end{proof}

\begin{thm} \label{xi(-n,k;m+1)}
For $m \in \mathbb{Z}_{\geq0}$, $k \in \mathbb{Z}_{\geq1}$ and $n \in \mathbb{Z}_{\geq0}$ with $k>n+1$, we have
\begin{equation}
\label{xi(-n,k;m+1)eq}
\xi(-n,k;m+1) = \sum_{l=0}^{n+1} D_{l}^{(n)} \sum_{\substack{a_1+\cdots+a_{k-l}=m \\ \forall a_j\geq0}} (a_{k-l}+1) \zeta(a_1+1,\ldots,a_{k-l-1}+1,a_{k-l}+2).
\end{equation}
\end{thm}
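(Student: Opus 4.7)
The plan is to reduce the double-index zeta $\xi(-n,k;s)$ to a linear combination of single-index Arakawa--Kaneko values, in direct analogy with the proof of Theorem \ref{eta(-n,k;m+1)}. First I would substitute the decomposition
$$
{\rm Li}_{-n,k}(1-e^{-t}) = \sum_{l=0}^{n+1} D_l^{(n)}\, {\rm Li}_{k-l}(1-e^{-t})
$$
from Lemma \ref{Li_{-n,k}} into the defining integral (\ref{xi(-n,k;s)eq}). Interchanging the finite sum with the integral yields
$$
\xi(-n,k;s) = \sum_{l=0}^{n+1} D_l^{(n)}\, \xi(k-l;s),
$$
which is already recorded in (\ref{xixxeq}). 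The hypothesis $k > n+1$ is used in an essential way here: it guarantees $k-l \geq 1$ for every $0 \leq l \leq n+1$, so each constituent $\xi(k-l;s)$ is a genuine, well-defined Arakawa--Kaneko zeta function.

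Next I specialize to $s=m+1$ and invoke the classical Arakawa--Kaneko formula (\ref{xim+1eq}) in the single-index case $r=1$, namely
$$
\xi(k-l;m+1) = \sum_{\substack{a_1+\cdots+a_{k-l}=m \\ \forall a_j \geq 0}} (a_{k-l}+1)\, \zeta(a_1+1, \ldots, a_{k-l-1}+1, a_{k-l}+2),
$$
and substitute term by term into the decomposition above. Collecting the outer sum weighted by $D_l^{(n)}$ yields precisely the right-hand side of (\ref{xi(-n,k;m+1)eq}), completing the proof.

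I do not anticipate any serious obstacle: all the heavy lifting has been done either in Lemma \ref{Li_{-n,k}}, which produces the coefficients $D_l^{(n)}$ via Faulhaber's formula, or in (\ref{xim+1eq}). The only point to check carefully is that the range $0 \leq l \leq n+1$ never pushes the inner single-index Arakawa--Kaneko value into ill-defined territory; this is exactly what the strict inequality $k > n+1$ ensures, and it is the reason why no case split (of the sort that appears in Theorem \ref{eta(-n,k;m+1)} for $k \leq n$) is needed here.
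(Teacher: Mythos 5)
Your proposal is correct and follows exactly the paper's argument: substitute the decomposition of ${\rm Li}_{-n,k}$ from Lemma \ref{Li_{-n,k}} to obtain (\ref{xixxeq}), specialize to $s=m+1$, and apply (\ref{xim+1eq}) with $r=1$ to each $\xi(k-l;m+1)$. Your added remark that $k>n+1$ guarantees $k-l\geq 1$ for all $0\leq l\leq n+1$ is precisely the point the paper leaves implicit.
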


\begin{proof}
Substituting (\ref{xixxeq}) for $s=m+1$, we obtain
\[
\xi(-n,k;m+1)=\sum_{l=0}^{n+1} D_{l}^{(n)} \xi(k-l;m+1).
\]
Hence, using (\ref{xim+1eq}), we can prove this theorem.
\end{proof}

\begin{ex}
We have $D_0^{(0)}=-1$ and $D_1^{(0)}=1$.
Hence, if $n=0$ and $k=2$ in Theorem \ref{xi(-n,k;m+1)}, we have
\begin{eqnarray*}
\xi(0,2;m+1)
&=& -\sum_{a_1+a_2=m}^{} (a_{2}+1) \zeta(a_1+1,a_{2}+2) + \sum_{a_1=m}^{} (a_{1}+1) \zeta(a_1+2).
\end{eqnarray*}
In particular $\xi(0,2;1)=-\zeta(1,2) +\zeta(2)$, we implies $\zeta(1,2)=\zeta(3)$ by Example \ref{exxi(-n,k;s)}.
\end{ex}

\begin{ex}
We have $D_0^{(1)}=0,D_1^{(1)}=-\frac{1}{2},D_2^{(1)}=\frac{1}{2}$.
Hence, if $n=1$ and $k=3$ in Theorem \ref{xi(-n,k;m+1)}, we have
\begin{eqnarray*}
\xi(-1,3;m+1)
&=& -\frac{1}{2}\sum_{a_1+a_2=m}^{} (a_{2}+1) \zeta(a_1+1,a_{2}+2) + \frac{1}{2}\sum_{a_1=m}^{} (a_{1}+1) \zeta(a_1+2).
\end{eqnarray*}
In particular $\xi(-1,3;1)=-\frac{1}{2}\zeta(1,2) + \frac{1}{2}\zeta(2)$, we implies $\zeta(1,2)=\zeta(3)$ by Example \ref{exxi(-n,k;s)}.
\end{ex}


\section{$\widetilde{\xi}$ function}\label{sec-4}

In this section, we consider $\widetilde{\xi}$ function, whose indices consist of positive integers and nonpositive integers.

\begin{rem}
For $k \in \mathbb{Z}_{\geq1}$ and $n \in \mathbb{Z}_{\geq0}$, $\widetilde{\xi}(-n,k;s)$ cannot be defined because by Lemma \ref{Li_{-n,k}} we see
\begin{eqnarray*}
\widetilde{\xi}(-n,k;s)
&=& \frac{1}{\Gamma(s)} \int_{0}^{\infty} t^{s-1} \frac{1}{e^{-t}-1} \sum_{l=0}^{n+1} A_l^{(n)} {\rm Li}_{k-l}(1-e^t) dt \\
&=& D_0^{(n)} \widetilde{\xi} (k;s) + \cdots + D_{n+1}^{(n)} \widetilde{\xi} (k-n-1;s).
\end{eqnarray*}
\end{rem}

On the other hand, we can give the following definition by Lemma \ref{Li_{k,-n}}.

\begin{defi} \label{tildexi(k,-n;s)}
For $k \in \mathbb{Z}_{\geq1}$ and $n \in \mathbb{Z}_{\geq0}$ with $k<n$, define
\begin{equation}
\label{tildexi(k,-n;s)eq}
\widetilde{\xi}(k,-n;s) = \frac{1}{\Gamma(s)} \int_{0}^{\infty} t^{s-1} \frac{{\rm Li}_{k,-n}(1-e^{t})}{e^{-t}-1}dt.
\end{equation}
for $s \in \mathbb{C}$ with $\Re(s)>0$.
\end{defi}

\begin{rem}
When $k \geq n$, $\widetilde{\xi}(k,-n;s)$ cannot be defined.
In fact, even if $k=1$ and $n=0$, we see that
\begin{equation*}
\widetilde{\xi}(1,0;s)
= \frac{1}{\Gamma(s)} \int_{0}^{\infty} t^{s-1} {\rm Li}_{1}(1-e^{t}) dt
= -\frac{1}{\Gamma(s)} \int_{0}^{\infty} t^s dt
\end{equation*}
which is not convergent for any $s \in \mathbb{C}$.
\end{rem}

\begin{ex} \label{extildexi(k,-n;s)}
By Definition \ref{tildexi(k,-n;s)} and Lemma \ref{Li_{k,-n}}, we can derive $\widetilde{\xi}(1,-2;s) = -\frac{s-3}{2^s} + s-3$, and
\begin{align*}
&\widetilde{\xi}(1,-2;1) = -1, \\
&\widetilde{\xi}(2,-3;1) = -1, \\
&\widetilde{\xi}(3,-4;1) = -1.
\end{align*}
\end{ex}

\begin{thm} \label{tildexi(k,-n;-m)}
When $k<n$, the function $\widetilde{\xi}(k,-n;s)$ can be analytically continued as an entire function.
And the values at nonpositive integers are given by
\begin{equation}
\widetilde{\xi}(k,-n;-m) = C_m^{(k,-n)} \ \ \ \ (m \in \mathbb{Z}_{\geq0}).
\end{equation}
\end{thm}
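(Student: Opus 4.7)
The plan is to reproduce, almost verbatim, the Hankel-contour argument from the proof of Theorem \ref{eta(k,-n;-m)}, with $1-e^{-t}$ replaced by $1-e^{t}$ in the polylogarithm and $e^t-1$ replaced by $e^{-t}-1$ in the denominator. Let $C$ denote the standard contour (positive real axis from $\infty$ to $\varepsilon$, a counterclockwise circle $C_\varepsilon$ of radius $\varepsilon$ around the origin, then back from $\varepsilon$ to $\infty$), and set
\[
\widetilde{H}(k,-n;s) = \int_{C} t^{s-1}\, \frac{{\rm Li}_{k,-n}(1-e^{t})}{e^{-t}-1}\, dt
= (e^{2\pi is}-1)\int_\varepsilon^\infty t^{s-1}\,\frac{{\rm Li}_{k,-n}(1-e^{t})}{e^{-t}-1}\,dt + \int_{C_\varepsilon} t^{s-1}\,\frac{{\rm Li}_{k,-n}(1-e^{t})}{e^{-t}-1}\,dt.
\]

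First I would show $\widetilde{H}(k,-n;s)$ is entire in $s$. The integrand is holomorphic on $C$ (no pole of $1/(e^{-t}-1)$ meets the contour), so this reduces to absolute convergence for every $s\in\mathbb{C}$, which is an exponential-decay estimate at $t\to\infty$ and an $O(t)$ vanishing at $t=0$. To control the factor ${\rm Li}_{k,-n}(1-e^{t})$ in the case $k<n$ relevant here, I would invoke the same identity
\[
{\rm Li}_{k,-n}(z) = \sum_{j=0}^{n}\binom{n}{j}{\rm Li}_{-n+j}(z)\,{\rm Li}_{k-j}(z)
\]
that underlies Lemma \ref{Li_{k,-n}} (valid by induction for any $k$). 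For $j\le k$ the factor ${\rm Li}_{k-j}(1-e^{t})$ is an honest polylogarithm vanishing at $t=0$ and growing only polynomially in $t$; for $j>k$ it is a rational function of the form $\mathcal{E}_{j-k}(1-e^{t})/e^{(j-k+1)t}$; and ${\rm Li}_{-n+j}(1-e^{t})=\mathcal{E}_{n-j}(1-e^{t})/e^{(n-j+1)t}$ always. Combining these with $1/(e^{-t}-1)\to -1$ at $+\infty$ gives the required exponential decay, while the vanishing $1-e^t=O(t)$ at $0$ gives the behaviour near the origin.

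Next, for $\Re(s)>0$ the usual estimate $\bigl|\!\int_{C_\varepsilon}\!\bigr|=O(\varepsilon^{\Re(s)})\to0$ as $\varepsilon\to0$ yields
\[
\widetilde{H}(k,-n;s)=(e^{2\pi is}-1)\,\Gamma(s)\,\widetilde{\xi}(k,-n;s),
\]
so $\widetilde{\xi}(k,-n;s)=\widetilde{H}(k,-n;s)/\bigl[(e^{2\pi is}-1)\Gamma(s)\bigr]$ furnishes the analytic continuation. The denominator has (removable) zeros only at the positive integers, where the left-hand side is already holomorphic by the original integral representation, so the continuation is entire, as in the proof of Theorem \ref{eta(k,-n;-m)}.

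Finally, using
\[
\lim_{s\to-m}\frac{1}{(e^{2\pi is}-1)\Gamma(s)}=\frac{(-1)^m m!}{2\pi i},
\]
together with the fact that at $s=-m$ the function $t^{-m-1}$ is single-valued on $C$, the integral $\widetilde{H}(k,-n;-m)$ reduces to $2\pi i$ times the residue of $t^{-m-1}\,\frac{{\rm Li}_{k,-n}(1-e^{t})}{e^{-t}-1}$ at $t=0$. Substituting $t\mapsto -t$ in the defining generating function of $C_m^{(k,-n)}$ gives
\[
\frac{{\rm Li}_{k,-n}(1-e^{t})}{e^{-t}-1}=\sum_{m\ge 0}(-1)^m C_m^{(k,-n)}\,\frac{t^m}{m!},
\]
so the residue equals $(-1)^m C_m^{(k,-n)}/m!$ and the two signs and the factorial cancel, leaving $\widetilde{\xi}(k,-n;-m)=C_m^{(k,-n)}$. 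The main obstacle is purely bookkeeping: verifying that the Eulerian-polynomial decomposition above yields enough decay at infinity and enough vanishing at $0$ when $k<n$, so that the contour integral genuinely defines an entire function; once this is in hand, the rest of the argument is mechanical.
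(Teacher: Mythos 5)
Your proposal is correct and is exactly the argument the paper intends: its proof of this theorem is simply ``similar to Theorem \ref{eta(k,-n;-m)}'', i.e.\ the same Hankel-contour continuation, with the evaluation at $s=-m$ picking out the Taylor coefficient of $\frac{{\rm Li}_{k,-n}(1-e^{t})}{e^{-t}-1}$, whose extra $(-1)^m$ from the substitution $t\mapsto -t$ in the generating function of $C_m^{(k,-n)}$ cancels against the $(-1)^m$ from $1/\bigl[(e^{2\pi i s}-1)\Gamma(s)\bigr]$. Your bookkeeping of the decay at infinity via the binomial decomposition of ${\rm Li}_{k,-n}$ into products of polylogarithms is a sound way to justify the convergence step that the paper leaves implicit.
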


\begin{proof}
Similar to Theorem \ref{eta(k,-n;-m)}, we can obtain the proof.
\end{proof}

\begin{thm} \label{tildexi(k,-n;m+1)}
For $m \in \mathbb{Z}_{\geq0}, k \in \mathbb{Z}_{\geq1}$ and $n \in \mathbb{Z}_{\geq0}$ with $k<n$, we have
\begin{eqnarray}
\label{tildexi(k,-n;m+1)eq}
\widetilde{\xi}(k,-n;m+1)
&=& -\sum_{l=0}^{k-2} \sum_{j=0}^{n-l-1} \sum_{\substack{a_{l+1}+\cdots+a_{k}=m \\ \forall a_j\geq0}} {A'}_{l,j}^{(n)} (a_k+1) \frac{1}{(n-l-j)^{a_{l+1}+1}}  \nonumber \\
&& \times \zeta^{\star}(a_{l+2}+1, \ldots, a_{k-1}+1,a_{k}+2; \{n-l-j\}^{k-l-1}) \nonumber \\
&& - \sum_{j=0}^{n-k} {A'}_{k-1,j}^{(n)} \frac{m+1}{(n-k-j+1)^{m+2}} \nonumber \\
&& - \sum_{j=0}^{n-k-1} {A'}_{k,j}^{(n)} \left( \frac{1}{(n-k-j)^{m+1}}- \frac{1}{(n-k-j+1)^{m+1}} \right),
\end{eqnarray}
where ${A'}_{l,j}^{(n)}$ is a certain integer.
\end{thm}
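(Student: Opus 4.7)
The plan is to follow the proof of Theorem \ref{eta(k,-n;m+1)} with two adjustments: the regime becomes $k<n$ rather than $k\geq n$, and the outer kernel $1/(1-e^t)$ is replaced by $1/(e^{-t}-1)$. The key simplifying identity is $(1-e^t)/(e^{-t}-1)=e^t$, which absorbs the leading factor $z$ of the decomposition of ${\rm Li}_{k,-n}(z)$ and brings the integrand of $\widetilde{\xi}(k,-n;m+1)$ into a form directly comparable with the one treated in the $\eta$ argument.

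First, one has to extend Lemma \ref{Li_{k,-n}} to the range $k<n$. Using the identity ${\rm Li}_{k,-n}(z)=\sum_{j=0}^{n}\binom{n}{j}{\rm Li}_{-n+j}(z){\rm Li}_{k-j}(z)$ together with the Eulerian polynomial formula for ${\rm Li}_{-p}$, one obtains a decomposition of shape $\sum_{l=0}^{k-1}\widetilde{P}_{l}^{(n)}(z)/(1-z)^{n-l+1}\cdot{\rm Li}_{k-l}(z)$ augmented by purely rational tails coming from the summands with $j\geq k$. These tails must be reorganized, via repeated application of the Eulerian identity, into a single ``$l=k$'' bucket. Expanding $\widetilde{P}_{l}^{(n)}(z)=\sum_{j}{A'}_{l,j}^{(n)}(1-z)^{j}$ then defines the integer coefficients ${A'}_{l,j}^{(n)}$. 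After substituting $z=1-e^{t}$ the integrand reduces to a finite sum of terms of the form ${A'}_{l,j}^{(n)}\,e^{-(n-l-j)t}{\rm Li}_{k-l}(1-e^{t})$.

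Next, one splits the contribution according to the value of $k-l$. For $0\leq l\leq k-2$ the inner polylogarithm has index $\geq 2$; building a nested integral $\widetilde{I}(s;k,-n)$ analogous to $I(s;k,-n)$ in the proof of Theorem \ref{eta(k,-n;m+1)}, performing the change of variables $u_{l+1}=x_{k},\,u_{l+2}=x_{k-1}+x_{k},\,\ldots,\,u_{k}=x_{l+1}+\cdots+x_{k}$, and invoking Lemma \ref{calculation1} produces the first sum of (\ref{tildexi(k,-n;m+1)eq}) with Hurwitz parameter $\{n-l-j\}^{k-l-1}$. For $l=k-1$ one uses ${\rm Li}_{1}(1-e^{t})=-t$ to reduce the corresponding integral to $\frac{1}{m!}\int_{0}^{\infty}t^{m+1}e^{-(n-k-j+1)t}\,dt=(m+1)/(n-k-j+1)^{m+2}$, which gives the second sum. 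For $l=k$ one uses ${\rm Li}_{0}(1-e^{t})=e^{-t}-1$ to rewrite the factor as $e^{-(n-k-j+1)t}-e^{-(n-k-j)t}$, so that integration against $t^{m}/m!$ delivers exactly the telescoping difference $1/(n-k-j)^{m+1}-1/(n-k-j+1)^{m+1}$ of the third sum.

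The main obstacle is the bookkeeping of the first step: verifying that all rational tails coming from the ${\rm Li}_{-n+j}\cdot{\rm Li}_{k-j}$ products with $j\geq k+1$ can be absorbed cleanly into the single $l=k$ bucket, with integer coefficients, so that the final answer has the compact three-sum shape stated in the theorem. Once this extended form of Lemma \ref{Li_{k,-n}} is established and the sign change induced by $1/(e^{-t}-1)=-1/(1-e^{-t})$ is tracked, the remainder of the computation is a direct adaptation of the proof of Theorem \ref{eta(k,-n;m+1)} and presents no further difficulty.
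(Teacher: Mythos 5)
Your proposal follows essentially the same route as the paper: extend the decomposition of ${\rm Li}_{k,-n}(z)$ to the regime $k<n$ with the $j\geq k$ tails collected into a final ${\rm Li}_{0}$ bucket with integer-coefficient polynomials ${P'}_{i}^{(n)}$, expand those polynomials in powers of $(1-z)$ to define ${A'}_{l,j}^{(n)}$, absorb the kernel via $(1-e^{t})/(e^{-t}-1)=e^{t}$, and then treat the three cases $l\leq k-2$ (nested integrals plus Lemma \ref{calculation1}), $l=k-1$ (via ${\rm Li}_{1}(1-e^{t})=-t$), and $l=k$ (via ${\rm Li}_{0}(1-e^{t})=e^{-t}-1$) exactly as in the paper's proof. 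The obstacle you flag — cleanly packaging the rational tails into the single $l=k$ bucket — is precisely the point the paper itself only asserts, recording ${P'}_{n-k-1}^{(n)}(z)=\sum_{j=k}^{n}\binom{n}{j}\mathcal{E}_{n-j}(z)\mathcal{E}_{j-k}(z)$ in a remark, so your plan is a faithful reconstruction.
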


\begin{proof}
When $k<n$, we have
\begin{eqnarray}
\label{Li_{k,-n}eq2}
{\rm Li}_{k,-n}(z) 
&=& z \left\{ \frac{{P'}^{(n)}_{n-1}(z)}{(1-z)^{n+1}}{\rm Li}_{k}(z) + \frac{{P'}^{(n)}_{n-2}(z)}{(1-z)^{n}}{\rm Li}_{k-1}(z) + \cdots \right. \nonumber \\
&& \left. +\cdots+ \frac{{P'}^{(n)}_{n-k}(z)}{(1-z)^{n-k+2}}{\rm Li}_{1}(z) + \frac{{P'}^{(n)}_{n-k-1}(z)}{(1-z)^{n-k+1}}{\rm Li}_{0}(z) \right\},
\end{eqnarray}
where ${P'}_i^{(n)}(z)$ is a certain polynomial of coefficients of integers.
(When $k \geq n$, we have Lemma \ref{Li_{k,-n}}.)
By setting ${P'}^{(n)}_{i}(z)=\sum_{j=0}^{i}{A'}^{(n)}_{n-i-1,j}(1-z)^j$, we have ${P'}^{(n)}_{i}(1-e^t)=\sum_{j=0}^{i}{A'}^{(n)}_{n-i-1,j}e^{tj}$.
Consider the integral
\begin{eqnarray*}
\widetilde{I}(s;k,-n)
&=& \frac{1}{\Gamma(s)} \sum_{l=0}^{k-2} \sum_{j=0}^{n-l-1} {A'}_{l,j}^{(n)} \int_{0}^{\infty} \int_{0}^{u_k} \cdots \int_{0}^{u_{l+2}} u_k^{s-1} u_{l+1} \\
&& \times \frac{e^{u_{l+1}}}{e^{u_{l+1}}-1} \cdots \frac{e^{u_{k-1}}}{e^{u_{k-1}}-1} \frac{1}{e^{(n-l-j)u_k}} du_{l+1} \cdots du_k \\
&& + \sum_{j=0}^{n-k} {A'}_{k-1,j}^{(n)} \frac{s}{(n-k-j+1)^{s+1}} \\
&& + \sum_{j=0}^{n-k-1} {A'}_{k,j}^{(n)} \left( \frac{1}{(n-k-j)^s}-\frac{1}{(n-k-j+1)^{s}} \right).
\end{eqnarray*}
We can transform this formula as follows.
\begin{eqnarray*}
\widetilde{I}(s;k,-n)
&=& - \frac{1}{\Gamma(s)} \sum_{l=0}^{k-2} \sum_{j=0}^{n-l-1} {A'}_{l,j}^{(n)} \int_{0}^{\infty} u_{k}^{s-1} \int_{0}^{u_k} \cdots \left\{ \int_{0}^{u_{l+2}} \frac{{\rm Li}_{1}(1-e^{u_{l+1}})e^{u_{l+1}}}{e^{u_{l+1}}-1} du_{l+1} \right\} \\
&& \times \frac{e^{u_{l+2}}}{e^{u_{l+2}}-1} \cdots \frac{e^{u_{k-1}}}{e^{u_{k-1}}-1} \frac{1}{e^{(n-l-j)u_k}} du_{l+2} \cdots du_k \\
&& + \frac{1}{\Gamma(s)} \sum_{j=0}^{n-k} {A'}_{k-1,j}^{(n)} \int_{0}^{\infty} u_k^{s} \frac{1}{e^{u_k(n-k-j+1)}} du_k \\
&& + \frac{1}{\Gamma(s)} \sum_{j=0}^{n-k-1} {A'}_{k,j}^{(n)} \int_{0}^{\infty} u_k^{s-1} \frac{e^u_k-1}{e^{u_k(n-k-j+1)}} du_k \\
&=& -\frac{1}{\Gamma(s)} \sum_{l=0}^{k} \int_{0}^{\infty} u_k^{s-1} \frac{\sum_{j=0}^{n-l-1} {A'}_{l,j}^{(n)}e^{tj}}{e^{t(n-l)}} {\rm Li}_{k-l}(1-e^u_{k}) du_k \\
&=& -\frac{1}{\Gamma(s)} \sum_{l=0}^{k} \int_{0}^{\infty} u_k^{s-1} \frac{{P'}_{n-l-1}^{(n)}(1-e^t)}{e^{t(n-l)}} {\rm Li}_{k-l}(1-e^u_{k}) du_k \\
&=& -\widetilde{\xi}(k,-n;s).
\end{eqnarray*}
We make the change of variables $u_{l+1}=x_k, u_{l+2}=x_{k-1}+x_k, \ldots ,u_k=x_{l+1} + \cdots+x_k$.
Then, it follows from Lemma \ref{calculation1} that for $m \in \mathbb{Z}_{\geq 0}$,
\begin{eqnarray*}
\widetilde{I}(m+1;k,-n)
&=& \frac{1}{\Gamma(m+1)} \sum_{l=0}^{k-2} \sum_{j=0}^{n-l-1} {A'}^{(n)}_{l,j} \int_{0}^{\infty} \cdots \int_{0}^{\infty} (x_{l+1} +\cdots+x_k)^{m} x_k \\
&& \times \frac{e^{x_k}}{e^{x_k}-1} \cdots \frac{e^{x_{l+2}\cdots+x_k}}{e^{x_{l+2}+\cdots+x_k}-1} \frac{1}{e^{(n-l-j+1)(x_{l+1} +\cdots+x_k)}} dx_{l+1} \cdots dx_{k} \\
&& + \sum_{j=0}^{n-k} {A'}_{k-1,j}^{(n)} \frac{m+1}{(n-k-j+1)^{m+2}} \\
&& + \sum_{j=0}^{n-k-1} {A'}_{k,j}^{(n)} \left(  \frac{1}{(n-k-j)^{m+1}} -\frac{1}{(n-k-j+1)^{m+1}}\right) \\
&=&  \sum_{l=0}^{k-2} \sum_{j=0}^{n-l-1} \sum_{\substack{a_{l+1}+\cdots+a_{k}=m \\ \forall a_i\geq0}} {A'}^{(n)}_{l,j} \frac{1}{\Gamma(a_{l+1}+1) \cdots \Gamma(a_{k}+1)} \\
&& \times \int_{0}^{\infty} \cdots \int_{0}^{\infty} x_{l+1}^{a_{l+1}} \cdots x_{k-1}^{a_{k-1}} x_k^{a_k+1} \\
&& \times \frac{e^{x_k}}{e^{x_k}-1} \cdots \frac{e^{x_{l+2}\cdots+x_k}}{e^{x_{l+2}+\cdots+x_k}-1} \frac{1}{e^{(n-l-j+1)(x_{l+1} +\cdots+x_k)}} dx_{l+1} \cdots dx_{k} \\
&& + \sum_{j=0}^{n-k} {A'}_{k-1,j}^{(n)} \frac{m+1}{(n-k-j+1)^{m+2}} \\
&& + \sum_{j=0}^{n-k-1} {A'}_{k,j}^{(n)} \left( \frac{1}{(n-k-j)^{m+1}} -\frac{1}{(n-k-j+1)^{m+1}} \right) \\
&=& \sum_{l=0}^{k-2} \sum_{j=0}^{n-l-1} \sum_{\substack{a_{l+1}+\cdots+a_{k}=m \\ \forall a_i\geq0}} {A'}^{(n)}_{l,j}(a_k+1) \frac{1}{(n-l-j+1)^{a_{l+1}+1}} \\
&& \times \zeta^{\star} (a_{l+2}+1, \ldots , a_{k-1}+1, a_{k}+2;\{ n-l-j+1 \}^{k-l-1}) \\
&& + \sum_{j=0}^{n-k} {A'}_{k-1,j}^{(n)} \frac{m+1}{(n-k-j+1)^{m+2}} \\
&& + \sum_{j=0}^{n-k-1} {A'}_{k,j}^{(n)} \left( \frac{1}{(n-k-j)^{m+1}}  -\frac{1}{(n-k-j+1)^{m+1}}\right).
\end{eqnarray*}
\end{proof}

\begin{rem}
Using the Eulerian polynomial $\mathcal{E}_{i}(z)$, we obtain
\begin{eqnarray*}
{P'}_{n-m-1}^{(n)}(z) &=& \binom{n}{m} \mathcal{E}_{n-m}(z) \ \ \ \ (0 \leq m \leq k-1), \\
{P'}_{n-k-1}^{(n)}(z) &=& \sum_{j=k}^{n} \binom{n}{j} \mathcal{E}_{n-j}(z) \mathcal{E}_{j-k}(z).
\end{eqnarray*}
Further ${A'}^{(n)}_{l,j}$ can be explicitly written.
However, this is complicated.
\end{rem}

\begin{ex}
We have ${A'}_{0,0}^{(2)}=2,{A'}_{0,1}^{(2)}=-1$ and ${A'}_{1,0}^{(2)}=3$.
Hence, by Theorem \ref{tildexi(k,-n;m+1)}, if $k=1$ and $n=2$,
\begin{eqnarray*}
\widetilde{\xi}(1,-2;m+1)
&=& -2\frac{m+1}{2^{m+2}} +(m+1) -3\left(1-\frac{1}{2^{m+1}} \right).
\end{eqnarray*}
Setting $m=0$, we obtain $\widetilde{\xi}(1,-2;1)=-1$.
This result corresponds to Example \ref{extildexi(k,-n;s)}.
\end{ex}

\begin{ex}
We have ${A'}_{0,0}^{(4)}=24,{A'}_{0,1}^{(4)}=-36,{A'}_{0,2}^{(4)}=14,{A'}_{0,3}^{(4)}=-1,{A'}_{1,0}^{(4)}=24,{A'}_{1,1}^{(4)}=-24,{A'}_{1,2}^{(4)}=4,{A'}_{2,0}^{(4)}=12,{A'}_{2,1}^{(4)}=-6$ and ${A'}_{3,0}^{(4)}=5$.
Hence, by Theorem \ref{tildexi(k,-n;m+1)}, if $k=3$ and $n=4$,
\begin{eqnarray*}
\widetilde{\xi}(3,-4;m+1)
&=&  -\sum_{a_1+a_2+a_3=m} 24(a_3+1) \frac{1}{4^{a_1+1}} {\zeta}^{\star}(a_2+1,a_3+2;{\{4\}}^{2}) \\
&& +\sum_{a_1+a_2+a_3=m} 36(a_3+1) \frac{1}{3^{a_1+1}} {\zeta}^{\star}(a_2+1,a_3+2;{\{3\}}^{2}) \\
&& -\sum_{a_1+a_2+a_3=m} 14(a_3+1) \frac{1}{2^{a_1+1}} {\zeta}^{\star}(a_2+1,a_2+2;{\{2\}}^{2}) \\
&& +\sum_{a_1+a_2+a_3=m} (a_3+1) {\zeta}^{\star}(a_2+1,a_3+2;{\{1\}}^{2}) \\
&& -\sum_{a_2+a_3=m} 24(a_3+1) \frac{1}{3^{a_2+1}} {\zeta}^{\star}(a_3+2;3) \\
&& +\sum_{a_2+a_3=m} 24(a_3+1) \frac{1}{2^{a_2+1}} {\zeta}^{\star}(a_3+2;2) \\
&& -\sum_{a_2+a_3=m} 4(a_3+1) {\zeta}^{\star}(a_3+2;1) \\
&& -12\frac{m+1}{2^{m+2}} +6(m+1) -5 \left( 1- \frac{1}{2^{m+1}} \right).
\end{eqnarray*}
Setting $m=0$, we obtain $\widetilde{\xi}(3,-4;1)=-6{\zeta}^{\star}(1,2;{\{4\}}^{2})+12{\zeta}^{\star}(1,2;{\{3\}}^{2})-7{\zeta}^{\star}(1,2;{\{2\}}^{2})+6{\zeta}^{\star}(1,2;{\{1\}}^{2})-8{\zeta}^{\star}(2;3) + 12{\zeta}^{\star}(2;2)-4{\zeta}^{\star}(2;1) +\frac{1}{2}$, which implies $-6{\zeta}^{\star}(1,2;{\{4\}}^{2})+12{\zeta}^{\star}(1,2;{\{3\}}^{2})-7{\zeta}^{\star}(1,2;{\{2\}}^{2})+6{\zeta}^{\star}(1,2;{\{1\}}^{2})-8{\zeta}^{\star}(2;3) + 12{\zeta}^{\star}(2;2)-4{\zeta}^{\star}(2;1)=-\frac{3}{2}$ by Example \ref{extildexi(k,-n;s)}.
\end{ex}

\begin{rem}
More generally it seems possible to construct $\eta(k_1 \ldots,k_r;s)$ for $k_1, \ldots,k_r \in \mathbb{Z}$, and $\xi(k_1 \ldots,k_r;s)$ and $\widetilde{\xi}(k_1 \ldots,k_r;s)$ for $k_1, \ldots,k_r \in \mathbb{Z}$ under certain conditions in a similar manner. However, these precedures will be remarkably complicated.
\end{rem}


\section*{Acknowledgments}

The author would like to thank Professor Hirofumi Tsumura for his valuable suggestions, and also appreciate Professor Masanobu Kaneko for his valuable advice and encouragement.


\vspace{40pt}

\noindent
{\large T.Hoshi : } \\
Department of Mathematics and Sciences, Tokyo Metropolitan University, 1-1, Minami-Osawa, Hachioji, Tokyo 192-0397 Japan \\
e-mail : tmk9623546@gmail.com 

\end{document}